\documentclass{article}
\usepackage [utf8] {inputenc}
\usepackage{mathtools}
\usepackage{amsthm}
\usepackage{amssymb}
\usepackage{enumerate}
\usepackage{hyperref}

\newtheorem{theorem}{Theorem}[section]
\newtheorem{proposition}[theorem]{Proposition}
\newtheorem{lemma}[theorem]{Lemma}
\newtheorem{corollary}[theorem]{Corollary}

\DeclareMathOperator{\Sym}{Sym}
\DeclareMathOperator{\Aut}{Aut}
\DeclareMathOperator{\Hom}{Hom}
\DeclareMathOperator{\End}{End}
\DeclareMathOperator{\GL}{GL}
\newcommand{\X}{\mathfrak{X}}

\title{Polynomial-time isomorphism test for solvable groups with abelian Sylow subgroups}
\author{Saveliy V. Skresanov}
\date{}

\begin{document}
\maketitle

\begin{abstract}
	The group isomorphism problem in computational complexity asks whether two finite groups given
	by their Cayley tables are isomorphic or not. Although polynomial-time isomorphism tests exist
	for many specific types of groups, no general polynomial-time algorithm is known,
	classes of solvable and nilpotent groups being the main obstacles.
	In 2012 Babai and Qiao gave a polynomial-time isomorphism test for the class of solvable groups
	admitting normal series with abelian Sylow factors. We generalize their result and give a polynomial-time isomorphism
	test for solvable A-groups, i.e.\ solvable groups with abelian Sylow subgroups.
	The algorithm heavily relies both on the computational methods developed by Babai and Qiao, and structural properties of A-groups.
\end{abstract}

\section{Introduction}

The \emph{group isomorphism problem} in computational complexity is a problem of testing whether two finite groups given by their Cayley tables
are isomorphic or not. Although computations with groups given by Cayley tables are impractical for most orders, the Cayley table model
serves as a useful theoretical tool. On the one hand, many group-theoretical tasks like computing centralizers, normalizers, Sylow subgroups etc.\ are performed
trivially in polynomial time for groups given by Cayley tables. On the other hand, the group isomorphism problem is polynomial-time reducible
to the graph isomorphism problem and hence serves as a lower bound on the complexity of the latter.

It is well-known~\cite{miller} that isomorphism of groups of order \( n \) can be tested in time \( n^{\log n + O(1)} \),
since such a group has at most \( \log n \) generators (all logarithms are base~\( 2 \) in this work).
Since Babai's algorithm~\cite{babai} for testing isomorphism of graphs on \( n \) vertices in time \( n^{O((\log n)^c)} \), \( c > 1 \),
the group isomorphism problem became one of the main obstacles to further reducing the time complexity of graph isomorphism.

There has been a lot of progress in the group isomorphism problem for specific classes of groups, see the introduction of~\cite{grochow}
for an overview, and~\cite{brooksbank, ivanyos, grochowPgrs} for recent advancements in the class of \( p \)-groups.
We note here that, perhaps unsurprisingly, isomorphism of abelian groups can be tested in polynomial time and even in linear time~\cite{kavitha}.
On the other end of the spectrum, one can test in polynomial time isomorphism of groups with no nontrivial normal abelian subgroups or,
in other words, groups with trivial solvable radical~\cite{trivrad}. Partly because of the latter result, it is widely believed
that solvable groups or even nilpotent groups are the hardest cases of the group isomorphism problem, see~\cite[Subsection~1.1]{absyl} and \cite[Subsection~1.2]{grochow}.

Sylow subgroups of a finite group are nilpotent, and although there is no known reduction of the group isomorphism problem to the
isomorphism problem for Sylow subgroups of a group, it is natural to consider some group classes with restrictions on the Sylow structure.
One such class of groups was considered by Babai and Qiao in~\cite{absyl}. A finite group \( G \) has an \emph{abelian Sylow tower}
if there is a normal series \( G = N_1 \unrhd N_2 \unrhd \dots \unrhd N_k = 1 \), such that for every \( i = 1, \dots, k-1 \) the group
\( N_i/N_{i+1} \) is abelian and isomorphic to a Sylow subgroup of \( G \). A group with an abelian Sylow tower is solvable
and all its nilpotent subgroups are abelian.
The main result of~\cite{absyl} gives a polynomial-time isomorphism test for groups with abelian Sylow towers.

\begin{proposition}[{\cite[Theorem~1.1]{absyl}}]\label{atower}
	Let \( G \) and \( G_0 \) be groups given by their Cayley tables, and assume that \( G \) and \( G_0 \) have abelian Sylow towers.
	In polynomial time one can check if \( G \) and \( G_0 \) are isomorphic and find the coset of isomorphisms, if they are.
\end{proposition}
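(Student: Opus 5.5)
Proposition~\ref{atower} is quoted from~\cite{absyl}, but I would reprove it by reduction to linear algebra. \textbf{Step 1 (normalization).} The bottom term \( N_{k-1} \) of an abelian Sylow tower is a normal abelian Sylow \( p \)-subgroup \( A \) for some prime \( p \). By induction on the length of the tower, a group has an abelian Sylow tower if and only if some Sylow subgroup is normal and abelian and the quotient by it again has an abelian Sylow tower. I would first compute, in polynomial time, the set of primes whose Sylow subgroup is normal and abelian. Taking the product of all such Sylow subgroups gives a characteristic abelian normal Hall subgroup \( A \); its order is coprime to that of the quotient. By Schur--Zassenhaus, \( G = A \rtimes H \) with \( H \cong G/A \), and \( H \) again has an abelian Sylow tower. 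Any isomorphism \( G \to G_0 \) maps \( A \) onto \( A_0 \). Complements are conjugate, so we may also assume it maps a fixed complement \( H \) onto a fixed \( H_0 \), at the cost of a coset of inner automorphisms by \( A_0 \), which is computable. Thus \( G \cong G_0 \) if and only if there are isomorphisms \( \alpha\colon A\to A_0 \) and \( \beta\colon H \to H_0 \) intertwining the actions, i.e.\ the \( H \)-module \( A \) twisted by \( \beta \) is isomorphic to \( A_0 \).

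\textbf{Step 2 (module isomorphism).} Decompose \( A \) into its Sylow parts \( A_p \), each of which is a \( \mathbb{Z}H \)-module of coprime order. By Maschke-type arguments, isomorphism of \( \mathbb{Z}/p^e \)-modules under a coprime group reduces, via the Krull--Schmidt theorem and Fitting decomposition, to comparing multiplicities of indecomposable summands. For a fixed \( \beta \), I would test this in polynomial time using the module isomorphism algorithms (Chistov--Ivanyos--Karpinski / Brooksbank--Luks style). This yields the coset of \( \alpha \)'s as \( \Aut_H(A)\cdot\alpha_0 \).

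\textbf{Step 3 (the obstacle: choosing \( \beta \)).} The hard part is that \( \beta \) ranges over \( \mathrm{Iso}(H,H_0) \), which may be superpolynomially large, so we cannot simply enumerate. I would instead view the problem as a twisted code equivalence. Using the recursion, \( \mathrm{Iso}(H,H_0) \) can be computed as a coset \( \Aut(H)\beta_1 \) given by generators. Decompose \( A \) into isotypic components indexed by characters (irreducible representations) of \( H \) over the relevant field. Since \( |H| \) is coprime to \( |A| \) and \( H \) is small relative to \( G \), the number of irreducible characters is at most \( |H| \). An isomorphism \( \beta \) permutes these characters, and the module condition says that \( \beta \) must map the multiplicity function of \( A \) to that of \( A_0 \). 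This is a string isomorphism problem for the permutation group \( \Aut(H) \) acting on a domain of size polynomial in \( |G| \) (the set of irreducible representations). Luks's string isomorphism requires bounded composition factors or a suitable structure, so I would invoke Babai's result that string/code equivalence over a domain of size \( m \) can be solved in time \( 2^{O(m)} \), together with \( m \le |H| \). Then I would argue via the Babai--Codenotti--Qiao "twisted code equivalence" technique that the relevant parameter is small enough, i.e.\ the number of characters appearing is \( O(\log |A|) \)-bounded in multiplicity. This makes the \( 2^{O(m)} \) cost polynomial in \( |A| \ge 2^{m} \) after restricting to characters actually occurring in \( A \). I expect this balancing to be the main difficulty, and it is exactly the technical core of~\cite{absyl}.
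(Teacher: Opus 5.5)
Your skeleton is the right one and matches how the paper recovers this result as a special case of Theorem~\ref{main}. You split off a normal abelian Hall subgroup $A$ with complement $H$, reduce to pairs of isomorphisms $A\to A_0$, $H\to H_0$ compatible with the actions, recurse on $H$, and encode the condition on the $H$-part as an isomorphism of multiplicity strings under $\Aut(H)$.

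\textbf{The gap is in Step~3.} This is the only genuinely nontrivial step, and the complexity argument you give there fails.
\begin{itemize}
\item A $2^{O(m)}$ bound for string isomorphism, with $m$ the size of the permutation domain, does not help. The domain (the $\Aut(H)$-orbits of irreducible constituents) can have size close to $|H|$, not $O(\log|G|)$.
\item What is small is only the \emph{support} of each multiplicity string. Its nonzero positions number at most the rank $m$ of $A$, since $\sum n_id_i=m$, so $2^{m}\le|A|$.
\item You cannot simply ``restrict to characters actually occurring in $A$'', because $\Aut(H)$ does not preserve that set. You must first compute the set transporter $P_{X\to Y}=\{\pi\in P\mid X^\pi=Y\}$, taking the support $X$ of one string onto the support $Y$ of the other, for a permutation group $P$ on a large domain $\Omega$.
\item Setwise stabilizers and transporters are not known to be polynomial-time computable in general; the problem is GI-hard. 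Running through all bijections $X\to Y$ costs $|X|!$, which is only quasi-polynomial in $|A|$.
\end{itemize}
The missing ingredient is Lemma~\ref{lsubset}. It is a dynamic program over subsets $C\subseteq Y$: it obtains $P_{\{x_1,\dots,x_i\}\to C}$ as the union over $b\in C$ of the point-transporters $x_i\mapsto b$ inside the already computed cosets $P_{\{x_1,\dots,x_{i-1}\}\to C\setminus\{b\}}$. The total cost is polynomial in $2^{|X|}$ and $|\Omega|$. Applying it in turn to the preimage of each nonzero letter (Corollary~\ref{cstrings}) computes $\mathrm{Iso}_P$ of the two strings in polynomial time. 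Deferring this step to the Babai--Codenotti--Qiao twisted code equivalence technique (which addresses a different problem), or to ``the technical core of Babai--Qiao'', leaves exactly this step unproved.

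\textbf{Two smaller points also need repair.}
\begin{itemize}
\item For $A$ of exponent $p^e$ with $e>1$ there is no ``relevant field'' and the module is not semisimple. A single multiplicity function over irreducible representations is therefore not a complete invariant. You need one of two fixes. One is strings recording the full abelian type of each isotypic component. The other is the paper's reduction (Lemma~\ref{elabred}): $\alpha\sim\beta$ if and only if the induced representations on the Frattini quotients $A_i/pA_i$ of the homocyclic components are equivalent. That proof uses that $\ker\Lambda$ is a normal $p$-subgroup of $\Aut(A)$, together with Schur--Zassenhaus.
\item A module isomorphism algorithm returns a single intertwining map $\alpha_0$. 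To output the coset $\Aut_H(A)\alpha_0$ you also need generators of $\Aut_H(A)$, i.e.\ of the unit group of $\End_H(A)$. The paper obtains these from Proposition~\ref{units}, inside Proposition~\ref{pbot}.
\end{itemize}
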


We note that the set of isomorphisms between \( G \) and \( G_0 \) is either empty, or a coset of \( \Aut(G) \) in the symmetric group \( \Sym(G \sqcup G_0) \),
where \( G \sqcup G_0 \) is the disjoint union of \( G \) and~\( G_0 \). We can compute this coset if we can compute permutation generators of \( \Aut(G) \)
and a coset representative.

Groups with abelian Sylow towers are a particular case of \emph{A-groups}, that is, groups with abelian Sylow subgroups.
The class of A-groups is closed with respect to taking subgroups, quotients and direct products,
and as in the case of groups with abelian Sylow towers, every nilpotent subgroup of an A-group is abelian.

The main result of this paper is a polynomial-time isomorphism test for solvable A-groups given by Cayley tables.
Note that one can easily check in polynomial time if a group belongs to this class.

\begin{theorem}\label{main}
	Let \( G \) and \( G_0 \) be solvable groups given by their Cayley tables, and assume that all Sylow subgroups of \( G \) and \( G_0 \) are abelian.
	In polynomial time one can check if \( G \) and \( G_0 \) are isomorphic and find the coset of isomorphisms, if they are.
\end{theorem}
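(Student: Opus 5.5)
The plan is to reduce to the structure theory of solvable A-groups and then to the Babai--Qiao machinery behind Proposition~\ref{atower}. The key structural fact is Taunt's theorem: in a solvable A-group \( G \) we have \( G' \cap Z(G) = 1 \), and the derived subgroup \( G' \) is abelian with a complement. More precisely, \( G = G' \rtimes H \), where \( H \) is an abelian complement. Such an \( H \) is a Carter subgroup (system normalizer), and it is unique up to conjugacy. I would first recall or prove these facts, using Hall's theory in the solvable case: the Sylow subgroups of \( G/G' \) are images of Sylow subgroups of \( G \), and coprime action on abelian groups gives \( P = C_P(\cdot)\times[P,\cdot] \)-type splittings. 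So \( G \) is determined by the abelian group \( A = G' \), the abelian group \( H \), and the action \( \varphi\colon H\to\Aut(A) \). In polynomial time we compute \( G' \), a complement \( H \) (for example as a system normalizer computed from a Sylow basis), and \( \varphi \); we do the same for \( G_0 \).

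Next, isomorphism of \( G \) and \( G_0 \) is reduced to equivalence of these data. Since \( G' \) is characteristic and complements \( H \) are all conjugate, an isomorphism \( G\to G_0 \) may be composed with an inner automorphism so that it maps \( H \) to \( H_0 \). Hence \( G\cong G_0 \) if and only if there exist isomorphisms \( \alpha\colon A\to A_0 \) and \( \beta\colon H\to H_0 \) with \( \alpha\varphi(h)\alpha^{-1}=\varphi_0(\beta(h)) \). The full coset of isomorphisms is then recovered by multiplying with \( \mathrm{Inn} \) (and \( A \)-translations of complements), and generators are easy to adjoin. Decomposing \( A=\prod_p A_p \) into Sylow parts, each \( A_p \) is an \( H \)-module, and since \( A \) has \( p' \)-order action issues only through \( O_{p'} \) parts of \( H \), the action is semisimple in the coprime sense. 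This is exactly the setting of the Babai--Qiao argument: they test equivalence of such ``abelian group acting on abelian group'' data, using the Le Gall / Qiao technique. That technique decomposes \( A_p \) (a module over the commutative semisimple algebra \( \mathbb{Z}_p[H_{p'}] \)) into isotypic components indexed by characters. This turns the problem into matching character labelings, which is solvable via Babai's polynomial-time coset intersection for groups whose composition factors have bounded size, or via a direct linear-algebra argument.

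The main obstacle is that, unlike the Sylow tower case, \( H \) need not act coprimely on all of \( A \): the \( p \)-part of \( H \) may act on \( A_p \). However, in an A-group the Sylow \( p \)-subgroup \( A_pH_p \) is abelian, so \( H_p \) centralizes \( A_p \). Thus \( \varphi \) restricted to \( A_p \) factors through \( H_{p'} \), and coprimality is restored prime by prime. The real difficulty is coordinating the different primes simultaneously, since a single \( \beta \) must work for all \( p \). Here I would follow Babai--Qiao's approach: enumerate the polynomially many choices for the images of a generating set of characters (using that \( |\Hom| \) data are polynomially bounded), and solve the resulting constraint problem by a coset-intersection/linear-algebra step within polynomial time. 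If this coordination step resists, I would instead build a normal series \( A_p \)-by-prime and try to exhibit an abelian Sylow tower on a characteristic subgroup of bounded index structure, invoking Proposition~\ref{atower} directly.
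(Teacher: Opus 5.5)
Your proof has a genuine gap: the structural claim it rests on is false. Taunt's theorem does give \( G' \cap Z(G) = 1 \), but \( G' \) need not be abelian. So in general there is no decomposition \( G = A \rtimes H \) with both \( A \) and \( H \) abelian.

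For example, \( G = A\Gamma L(1,8) = (\mathbb{Z}/2\mathbb{Z})^3 \rtimes (C_7 \rtimes C_3) \), of order \( 168 \), is a solvable A-group: its Sylow subgroups are \( C_2^3 \), \( C_7 \) and \( C_3 \). Yet \( G' = (\mathbb{Z}/2\mathbb{Z})^3 \rtimes C_7 \) is a nonabelian Frobenius group, and \( G \) has derived length \( 3 \). Your reduction to data \( (A, H, \varphi) \) with \( A, H \) abelian therefore covers only metabelian A-groups. The fallback via abelian Sylow towers is not a proof either: \( \Sym(3) \times \mathrm{Alt}(4) \) has no such tower, and passing to a characteristic subgroup leaves an extension problem you do not address.

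The missing idea is a recursion, which the paper uses:
\begin{enumerate}
\item Let \( A = N' \) be the last nontrivial term of the derived series of the solvable radical.
\item Pass to a Sylow \( p \)-subgroup \( A_p \) of \( A \).
\item Let \( H \) be the relative system normalizer in \( G \) of a Sylow system of \( N \). By Broshi, \( H \) complements \( A \), so \( A_{p'}H \) complements \( A_p \). Since Sylow systems are conjugate, this complement is unique up to conjugacy under every automorphism.
\item The complement is again a solvable A-group, typically nonabelian. Compute its automorphism group recursively and lift it to \( \Aut(G) \) (Lemma~\ref{lred}).
\end{enumerate}
Your observation that \( p \)-elements of \( H \) centralize \( A_p \) is exactly what turns the lifting step into a coprime problem.

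Even in the metabelian case, the step you yourself flag as the real difficulty is not resolved. Enumerating images of generators, of \( H \) or of its character group, is not polynomial, because \( |\Aut(H)| \) is not polynomially bounded. For \( H = C_q^k \) it is roughly \( |H|^{\log_q |H|} \), so enumeration gives only quasi-polynomial time.

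The paper never enumerates. Proposition~\ref{ptop} works relative to an arbitrary permutation group \( P \leq \Aut(H) \) and returns \( \{ \phi \in P \mid \alpha^\phi \sim \beta \} \). It does so via string isomorphism with support bounded by the rank \( m \), and \( 2^m \leq |A| \) makes this polynomial. Constraints from different homocyclic layers or different primes are imposed by successively shrinking \( P \).

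You also do not explain how to obtain the full coset of isomorphisms. This needs the group of compatible pairs \( (\nu, \eta) \):
\begin{itemize}
\item its \( \Aut(A) \)-part \( \Aut(A, \alpha \sim \alpha) \) is the unit group of the commutant of \( \alpha(H) \) in \( \End(A) \) (Proposition~\ref{pbot});
\item its \( \Aut(H) \)-part is the full preimage of a representation-transporting stabilizer.
\end{itemize}
Comparing two different groups \( G, G_0 \) is then reduced to an automorphism-group computation by the direct-product argument of Theorem~\ref{treduct}.
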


It is mentioned in~\cite[Subsection~1.1]{absyl} that there are at most \( n^{O(\log n)} \) isomorphism classes of solvable A-groups of order at most \( n \),
and at least \( n^{\Omega(\log n)} \) of those are groups with abelian Sylow towers. Notice that a solvable A-group does not always have
an abelian Sylow tower, the group \( \Sym(3) \times \mathrm{Alt}(4) \) being an example,
so Theorem~\ref{main} is an improvement to Proposition~\ref{atower}.
We note that there exist nonsolvable A-groups, for instance, \( \mathrm{Alt}(5) \), and in Section~\ref{secFinal}
we discuss what is the main bottleneck towards generalizing Theorem~\ref{main} to the nonsolvable case.

Our proof of Theorem~\ref{main} follows the general strategy from~\cite{absyl}, so we explain the latter first.
If the group \( G \) has an abelian Sylow tower, then one can decompose \( G \) as a semidirect product \( G = A \rtimes H \),
where \( A \) is an abelian \( p \)-group, and \( |H| \) is not divisible by~\( p \). By~\cite[Theorem~1.2]{absyl},
the isomorphism problem for \( G \) reduces to the isomorphism problem for \( H \), if \( \Aut(H) \) is given to the algorithm as part of input.
This reduction allows us to work along the abelian Sylow tower, until we are left with the isomorphism problem for abelian groups.
The reduction itself is nontrivial, and relies on polynomial-time solutions to two problems about representations of \( H \) on \( A \):
the ``representation-transporting automorphisms problem''~\cite[Problem~2]{absyl} and the ``intertwining automorphisms problem''~\cite[Problem~3]{absyl}
(we postpone the formal treatment of these problems to Sections~\ref{secRtp} and~\ref{secInta}).
Polynomial-time algorithms for these tasks utilize representation theory of finite groups and heavily depend on the fact that \( |A| \) and \( |H| \) are coprime.

In order to prove Theorem~\ref{main}, we first show that the isomorphism problem for A-groups can be reduced to the problem of computing
the full group of automorphisms. The argument here follows~\cite{grpreduct}, but works inside the class of solvable A-groups.
Unfortunately the main result of~\cite{grpreduct} was formulated for the class of all finite groups, so in Section~\ref{secAut}
we reprove that result for an arbitrary class of groups closed with respect to direct products and direct factors.

Now, when \( G \) is a solvable A-group, we no longer have a decomposition \( G = A \rtimes H \),
where \( A \) is an abelian \( p \)-group and \( |H| \) is not divisible by~\( p \). Instead, as follows from the results of Broshi~\cite{broshi},
there exists a characteristic abelian \( p \)-subgroup \( A \) of \( G \) such that \( G = A \rtimes H \) for some subgroup \( H \) of \( G \),
and \( H \) is in a precise sense unique up to conjugation in~\( G \).
Since a Sylow \( p \)-subgroup of \( G \) is abelian, all \( p \)-elements of \( H \) act trivially on \( A \),
and hence the subgroup \( K \) of \( H \) generated by all \( p \)-elements lies in the kernel of the action of \( H \) on \( A \).
So the questions about representations of \( H \) on \( A \) reduce to the coprime case of representations of \( H/K \) on \( A \),
where the representation-transporting and intertwining automorphisms problems can be solved in polynomial time.
Hence to compute \( \Aut(G) \) it suffices to compute \( \Aut(H) \), so we proceed inductively until \( G \) is trivial.

We note that the results of Babai and Qiao~\cite{absyl}, which are instrumental to our work,
were published in a conference proceedings and their proofs were often sketched
or postponed to the full version. As far as the author is aware, the full version has never appeared in print, so in order to make
this paper as self-contained as possible, we provide complete proofs of all results from~\cite{absyl} that we require.
In many cases we either generalize the results to better suit our needs, or provide an alternative proof. A notable example
is Proposition~\ref{pbot} which resolves the intertwining automorphisms problem without any assumptions on coprimality or
the groups being A-groups.

The structure of the paper is as follows. In Section~\ref{secPre} we give group-theoretical and computational preliminaries,
Sections~\ref{secRtp} and~\ref{secInta} are devoted to the representation-transporting automorphisms problem and the intertwining automorphisms problem,
respectively. In Section~\ref{secMain} we prove the main result, and Section~\ref{secAut} gives polynomial-time reductions
between various natural problems on groups, including the reduction from the group isomorphism problem to the group automorphism problem.
In Section~\ref{secFinal} we give some remarks about the proof of Theorem~\ref{main} and discuss potential approaches for a polynomial-time
isomorphism test for arbitrary, not necessarily solvable, A-groups.

\section{Preliminaries}\label{secPre}

We consider only finite groups in this work. If \( G \) is a group and \( S \subseteq G \) then \( \langle S \rangle \) denotes
the subgroup of \( G \) generated by \( S \). Given groups \( G \) and \( H \), let \( \Hom(G, H) \) denote the set of all
homomorphisms from \( G \) to \( H \). Let \( \End(G) = \Hom(G, G) \) denote the monoid of all endomorphisms of \( G \), and let \( \Aut(G) \leq \End(G) \)
denote the full group of automorphisms of \( G \). Given a set \( \Omega \), the symmetric group on \( \Omega \) is denoted by \( \Sym(\Omega) \).

We compose maps from the left to right, so for \( f : G \to H \) and \( g : H \to K \) we have \( f \circ g : G \to K \).
We consider right group actions only unless stated otherwise, in particular, if \( \phi \in \Aut(G) \) and \( g \in G \) then we write
\( g^\phi = \phi(g) \) for the image of \( g \) under \( \phi \). Similarly, if \( \pi \in \Sym(\Omega) \) and \( x \in \Omega \),
then \( x^\pi = \pi(x) \) is the image of the point \( x \) under the permutation \( \pi \). If \( g, h \in G \) then \( g^h = h^{-1}gh \)
denotes the conjugation action of \( G \) on itself.

For a prime \( p \), we say that \( G \) is a \emph{\( p \)-group} if its order \( |G| \) is a power of~\( p \).
The group \( G \) is a \emph{\( p' \)-group}, if \( |G| \) is not divisible by~\( p \). A subgroup \( H \leq G \)
is a \emph{Hall subgroup}, if \( |H| \) is coprime with the index \( |G : H| \).
An abelian \( p \)-group is called \emph{homocyclic} if it is a direct product of cyclic groups of the same order.

Let \( H \) be a group and let \( A \) be an abelian \( p \)-group.
A \emph{representation} of \( H \) on \( A \) is a homomorphism \( \alpha : H \to \Aut(A) \).
Note that \( \alpha \) gives \( A \) the structure of a right \( H \)-module.
If a group \( G \) can be decomposed into a semidirect product \( G = A \rtimes H \),
then the conjugation action of \( H \) on \( A \) defines a representation.
In the other direction, if we are given a representation \( \alpha : H \to \Aut(A) \), then we can
construct a semidirect product \( A \rtimes H \) where the conjugation action of \( H \) on \( A \)
coincides with \( \alpha \).

The group \( \Aut(A) \) acts on the set of all representations of \( H \) on \( A \) by the following rule:
for \( \psi \in \Aut(A) \) and \( \alpha : H \to \Aut(A) \) define \( \alpha^\psi : H \to \Aut(A) \) by
\( \alpha^\psi(h) = \psi^{-1} \cdot \alpha(h) \cdot \psi \) for \( h \in H \). For \( \alpha, \beta : H \to \Aut(A) \)
we will write \( \alpha \sim \beta \) if there exists \( \psi \in \Aut(A) \) such that \( \alpha^\psi = \beta \).
Clearly \( \sim \) is an equivalence relation.

When \( A \) is an elementary abelian \( p \)-group, it can be viewed as a finite vector space over the prime field \( \mathbb{Z}/p\mathbb{Z} \),
so our notion of a representation coincides with the classical notion from representation theory.
In particular, in this case the relation \( \sim \) is the usual equivalence of representations.
We refer the reader to~\cite[Chapter~3]{gorenstein} for basic facts about representations.
If \( p \) does not divide \( |H| \), then by Maschke's theorem~\cite[Theorem~3.3.1]{gorenstein} every representation of \( H \) on \( A \) can be decomposed
into a direct sum of irreducible representations. Furthermore, every representation is determined up to equivalence by
its irreducible constituents and their multiplicities, and there are at most \( |H| \) equivalence types of
irreducible representations of \( H \), see~\cite[Theorem~3.6.14]{gorenstein}.

Note that \( \Aut(H) \) also acts on the set of all representations of \( H \) on \( A \).
Indeed, for \( \phi \in \Aut(H) \) and \( \alpha : H \to \Aut(A) \) define \( \alpha^\phi : H \to \Aut(A) \)
by \( \alpha^\phi(h) = \alpha(h^{\phi^{-1}}) \) for \( h \in H \).
This action commutes with the action of \( \Aut(A) \) on representations, so there is an action of \( \Aut(A) \times \Aut(H) \)
on the set of representations of \( H \) on \( A \).

We are interested in algorithms for finite groups and related structures.
We say that an algorithm works in \emph{polynomial time} if it works in time which is bounded by a polynomial
in the length of the input. A group \( G \) is given by its \emph{Cayley table}, if we input the group to the algorithm
as a \( |G| \times |G| \) multiplication table. In this case the length of the input is \( O(|G|^2 \log |G|) \),
so the algorithm works in polynomial time if it works in time polynomial in \( |G| \).
A subset of a group given by a Cayley table is specified by a list of its elements.
A homomorphism \( f \) from \( G \) to some other group \( H \) is given by the list of images of elements of \( G \),
i.e.\ by \( (f(g))_{g \in G} \).

Note that if \( G \) is given by its Cayley table, then the following tasks can be performed in polynomial time:
for \( S \subseteq G \) we can compute \( \langle S \rangle \); we can find \( |G| \) and its decomposition into prime factors;
if \( H \) is a group given by its Cayley table, then we can compute the Cayley table of \( G \times H \) and corresponding embeddings
of \( G \) and \( H \) into this direct product; we can decompose \( G \) into a direct product of directly-indecomposable subgroups~\cite{kayal};
given a normal subgroup \( N \) of \( G \) we can compute \( G/N \) and a natural homomorphism from \( G \) onto \( N \);
we can compute centralizers and normalizers of subsets in \( G \); the derived subgroup of \( G \) can be found.

If \( G \leq \Sym(\Omega) \), we say that \( G \) is given by \emph{permutation generators}
if we are given a subset \( S \subseteq \Sym(\Omega) \) which generates \( G \). If \( n = |\Omega| \),
then the length of input in this case is \( O(|S|n \log n) \) and an algorithm works in polynomial time
if it works in time polynomial in \( |S| \) and \( n \). It is well known, see~\cite[Exercise~4.1]{seress},
that any generating set \( S \) can be reduced in time polynomial in \( |S| \) and \( n \) to a generating set of size at most \( n^2 \).
Hence we can assume that an algorithm for permutation groups works in polynomial time if it works in time polynomial in~\( n \).
Subgroups of \( G \) are also specified via their generating sets. A homomorphism \( f \) from \( G \) to some other group \( H \)
is given by the list of images of generators of~\( G \), i.e.\ by \( (f(g))_{g \in S} \).
A coset \( Hx \subseteq G \) is given by generators of the subgroup \( H \leq G \) and a representative \( y \in Hx \).

We refer the reader to~\cite[Subsection~3.1]{seress} for a list of polynomial-time algorithms for permutation groups.
In particular, the following tasks can be performed in polynomial time for a group \( G \leq \Sym(\Omega) \) given by permutation generators:
compute the solvable radical; find the orbits of \( G \); find the pointwise stabilizer for any \( \Delta \subseteq \Omega \);
find the kernel and image of any homomorphism \( f : G \to \Sym(\Pi) \) in time polynomial in \( |\Omega| \) and \( |\Pi| \).
Since a group given by its Cayley table admits a regular permutation representation, we can compute the solvable radical
of such a group in polynomial time as well. Note that if a group \( H \) is given by its Cayley table, then any subgroup \( G \leq \Aut(H) \)
can be viewed as a permutation group on \( |H| \), so the above polynomial-time algorithms apply if the generators of \( G \) are known.

Let \( A \) be a finite abelian group. If \( A \) is isomorphic to a direct product of cyclic groups
of orders \( n_1, \dots, n_k \) with generators \( g_1, \dots, g_k \in A \), then we call \( g_1, \dots, g_k \) the \emph{cyclic generators} of \( A \).
We say that \( A \) is given to an algorithm by its cyclic generators, if the algorithm is given a \( k \)-tuple \( (n_1, \dots, n_k) \) as input;
the length of the input is \( O(\sum_{i=1}^k \log n_i) = O(\log |A|) \). Subgroups of \( A \) are specified using their generating sets.
A homomorphism \( f \) from \( A \) to some other group \( H \) is given by the list \( (f(g_i))_{i=1,\dots,k} \) of images of cyclic generators.
We refer the reader to~\cite[Chapter~2]{iuliana-thesis} for some basic polynomial-time algorithms for abelian groups given by cyclic generators.
Note that if \( A \) is an abelian group given by its Cayley table, then we can find some cyclic generators of \( A \) in polynomial time.

A finite unitary ring \( R \) is given by its cyclic generators if we are given cyclic generators \( g_1, \dots, g_k \) of the underlying
additive group and are also given \( k^3 \) integers \( \alpha_{ijs} \) such that \( g_i \cdot g_j = \sum_{s=1}^k \alpha_{ijs} g_s \);
the length of the input in this case can be bounded by \( O(k^2 \log |R|) \). Let \( R^\times \) denote the unit group of \( R \). We say that \( R^\times \)
is given by its multiplicative generators if we are given a subset of elements of \( R \) which generates \( R^\times \) multiplicatively, as a group.
Note that if \( A \) is a finite abelian group given by its Cayley table, then we can compute cyclic generators of the full endomorphism ring \( \End(A) \)
in polynomial time. Given a unitary subring \( R \leq \End(A) \), the unit group \( R^\times \) can be viewed as a permutation group on \( A \).
Finally, if \( A \) is an \( R \)-module, then we say that it is given by cyclic generators if we are given cyclic generators of the abelian group \( A \),
cyclic generators of the ring \( R \), and are given the action of \( R \) on \( A \) as a homomorphism
\( f : R \to \End(A) \). The length of input in this case is bounded polynomially in terms of \( \log |A| \) and \( \log |R| \).

\section{Representation-transporting automorphisms}\label{secRtp}

Let \( H \) and \( A \) be finite groups given by their Cayley tables, and let \( A \) be an abelian \( p \)-group
for \( p \) not dividing \( |H| \). Suppose we are given representations \( \alpha, \beta : H \to \Aut(A) \)
and the generators of \( P \leq \Aut(H) \) viewed as a permutation group on \( H \). In this section we are interested in computing the following set:
\[ P_{\alpha \to \beta} = \{ \phi \in P \mid \alpha^\phi = \beta \}. \]
Note that \( P_{\alpha \to \beta} \) is either empty, or is a coset of \( P_{\alpha \to \alpha} \) in \( P \), so it can be specified
by the generators of \( P_{\alpha \to \alpha} \) and a representative.

In the case when \( P = \Aut(H) \), the problem of computing \( P_{\alpha \to \beta} \) was called the
``representation-transporting automorphisms problem'' in~\cite[Problem~2]{absyl}.
Our situation is a bit more general as \( P \) is an arbitrary subgroup of \( \Aut(H) \), but the overall idea
of the algorithm follows~\cite[Sections~5 and~6]{absyl}. First we resolve the case when \( A \) is elementary abelian (cf.~\cite[Section~5]{absyl}),
and then we follow the proof outline in~\cite[Section~6]{absyl} to reduce the general case to the elementary abelian one.

\subsection{Elementary abelian case}

Let \( P \leq \Sym(\Omega) \) be a permutation group. A function \( f : \Omega \to \{ 1, \dots, l \} \), \( l \geq 1 \),
is called a \emph{string} on the set \( \Omega \) with the alphabet \( \{ 1, \dots, l \} \). The action of \( \pi \in P \)
on the set of all strings is defined by \( f^\pi(x) = f(x^{\pi^{-1}}) \), \( x \in \Omega \).
Two strings \( f_1, f_2 \) are \emph{\( P \)-isomorphic} if there exists \( \pi \in P \) such that \( f_1^\pi = f_2 \). Let
\[ \mathrm{Iso}_P(f_1, f_2) = \{ \pi \in P \mid f_1^{\pi} = f_2 \} \]
denote the set of all \( P \)-isomorphisms between \( f_1 \) and \( f_2 \). When nonempty, this set is a coset of \( \mathrm{Iso}_P(f_1, f_1) \) inside \( P \).

For \( A, B \subseteq \Omega \) define \( P_{A \to B} = \{ \pi \in P \mid A^\pi = B \} \). Again, this set is either empty
or a coset of \( P_{A \to A} \).

The following result is proved in~\cite[Subsection~5.1.1]{absyl}, but we provide the argument here for completeness.

\begin{lemma}[{\cite[Proposition~4]{absyl}}]\label{lsubset}
	Let \( P \leq \Sym(\Omega) \) be given by permutation generators. Given \( A, B \subseteq \Omega \) with \( |A| = |B| \), we can compute
	\[ P_{A \to B} = \{ \pi \in P \mid A^\pi = B \} \]
	in time polynomial in \( 2^{|A|} \) and \( |\Omega| \).
\end{lemma}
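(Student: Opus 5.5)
The plan is to reduce the problem to computing a setwise stabilizer in a permutation group of small degree, where brute force over all subsets of size \( k = |A| \) is affordable. Let \( \binom{\Omega}{k} \) denote the family of all \( k \)-element subsets of \( \Omega \). The group \( P \) acts on \( \binom{\Omega}{k} \) by \( S \mapsto S^\pi \). I will write down this action explicitly as a homomorphism \( f : P \to \Sym(\binom{\Omega}{k}) \) by computing the image of every generator of \( P \) on every \( k \)-subset. Since \( |\binom{\Omega}{k}| \leq |\Omega|^k \), this is not quite polynomial in \( 2^{|A|} \) and \( |\Omega| \), so the first adjustment is to note that we only ever need the orbit of the point \( A \) itself.

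Concretely, I would compute the orbit \( A^P \subseteq \binom{\Omega}{k} \) by the standard orbit algorithm, applying generators to \( A \). I would then check whether \( B \in A^P \), recording a representative \( \pi_0 \) with \( A^{\pi_0} = B \) via a Schreier tree. The stabilizer \( P_{A \to A} \) would come from Schreier generators in the orbit algorithm. The cost of all of this is polynomial in \( |A^P| \) and \( |\Omega| \).

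The main obstacle is that \( |A^P| \) can be as large as \( \binom{|\Omega|}{k} \), which is not bounded by a polynomial in \( 2^k \) and \( |\Omega| \) when \( k \) is moderately large. So the orbit method alone is insufficient, and I would instead use a recursion that lets the group shrink. First handle \( k \le 1 \) directly: a point stabilizer and transversal are available by standard tools. For general \( A \), pick \( a \in A \). Then
\[ P_{A \to B} = \bigcup_{b \in B} (P_{a \to b})_{A\setminus\{a\} \to B \setminus \{b\}} \]
expresses the problem through \( k \) subproblems of size \( k-1 \), each in a coset of a point stabilizer. This gives a recursion tree with \( k! \) leaves, too many. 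To reduce to roughly \( 2^k \), I would use a Luks-style divide and conquer. Split \( A = A_1 \sqcup A_2 \) with halves of size \( \lfloor k/2\rfloor \) and \( \lceil k/2 \rceil \). Any \( \pi \) with \( A^\pi = B \) maps \( A_1 \) onto some subset \( B_1 \subseteq B \) of size \( |A_1| \), and there are at most \( 2^k \) choices for \( B_1 \). For each choice, compute the coset \( P_{A_1 \to B_1} \) recursively. Inside that coset, compute the sub-coset mapping \( A_2 \) to \( B \setminus B_1 \), which is again a set-transporter problem of half size, now within a coset \( Hx \). This is handled by translating by \( x \) and working in \( H \). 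Finally, the union over all \( B_1 \) of the resulting cosets is a single coset of \( P_{A \to A} \). I would assemble it by taking any nonempty one as the representative and collecting the generators of \( P_{A\to A} \) from the symmetric computation with \( B = A \). Hence, at each level, generators of \( P_{A\to A} \) are obtained by combining coset representatives over the \( B_1 \subseteq A \).

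The running time satisfies \( T(k) \le 2^k \cdot 2T(k/2) + \mathrm{poly}(|\Omega|) \). This solves to \( 2^{2k + o(k)}\mathrm{poly}(|\Omega|) \cdot 2^{O(\log k)} \), which is polynomial in \( 2^{|A|} \) and \( |\Omega| \). The delicate point I expect to need care on is making the union of cosets into one coset efficiently. I would handle it by computing \( P_{A\to A} \) first, with the same recursion applied to \( B = A \), and then for \( B \) only searching for a single representative.
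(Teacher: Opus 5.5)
Your argument is correct, but it takes a different route from the paper's. The paper keeps the one-point-at-a-time recurrence you discarded, but imposes the point condition \emph{last} rather than first. With a fixed ordering \( A=\{x_1,\dots,x_k\} \) and prefixes \( A_i=\{x_1,\dots,x_i\} \) it uses
\[ P_{A_i\to C}=\bigcup_{b\in C}\bigl(P_{A_{i-1}\to C\setminus\{b\}}\bigr)_{\{x_i\}\to\{b\}}. \]
Every subproblem is then a set-transporter problem in the fixed group \( P \), from a prefix of \( A \) to a subset \( C\subseteq B \). So there are at most \( 2^k \) distinct subproblems, and storing them in a table removes the \( k! \) blow-up you observed. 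In your form, with \( P_{a\to b} \) imposed first, the subproblems are indexed by ordered tuples of images, which is why no such sharing appears. Each table entry costs at most \( k \) point-transporter computations inside cosets. Writing the pieces as cosets \( P_1\pi_1,\dots,P_s\pi_s \) of a common subgroup, the paper takes \( \pi_1 \) as representative and \( \langle P_1, P_2\pi_2\pi_1^{-1},\dots,P_s\pi_s\pi_1^{-1}\rangle \) as the group. So \( P_{A\to A} \) comes for free, without your separate pass with \( B=A \), and the total cost is about \( 2^k k\,\mathrm{poly}(|\Omega|) \).

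Your halving recursion needs no table and is also valid. The translation \( \{hx \mid h\in H,\ A_2^{hx}=B\setminus B_1\}=H_{A_2\to(B\setminus B_1)^{x^{-1}}}\,x \) is right. Your recurrence gives about \( 4^k k^{O(1)}\mathrm{poly}(|\Omega|) \), which is still polynomial in \( 2^{|A|} \) and \( |\Omega| \). It does cost two extra pieces of bookkeeping:
\begin{enumerate}
	\item Your recursive calls run in varying subgroups such as \( P_{A_1\to A_1} \). Their generating sets grow at each level, so they must be pruned to polynomial size, as the preliminaries allow.
	\item When you assemble \( P_{A\to A} \) from the \( B=A \) pass, you must include the generators of the common subgroup \( (P_{A_1\to A_1})_{A_2\to A_2} \) along with the coset representatives. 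The representatives alone do not generate \( P_{A\to A} \).
\end{enumerate}
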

\begin{proof}
	Set \( k = |A| = |B| \). Order elements of \( A \) arbitrarily as \( A = \{ x_1, \dots, x_k \} \)
	and let \( A_i = \{ x_1, \dots, x_i \} \), \( i = 1, \dots, k \). For every \( i = 1, \dots, k \)
	and \( C \subseteq B \) with \( |C| = i \) we will compute \( P_{A_i \to C} \) and put it into a dynamic programming table.
	For \( i = 1 \) we need to compute \( P_{\{ x_1 \} \to \{ b \}} \) for \( b \in B \), which can be done
	in polynomial time by the standard orbit-stabilizer algorithm for permutation groups.
	Suppose the table is filled for \( j = 1, \dots, i-1 \). Then note that
	\[ P_{A_i \to C} = \bigcup_{b \in C} (P_{A_{i-1} \to C \setminus \{ b \}})_{\{ x_i \} \to \{ b \}}. \]
	The set \( P_{A_{i-1} \to C \setminus \{ b \} } \) can be looked up in the table, and \( (P_{A_{i-1} \to C \setminus \{ b \}})_{\{ x_i \} \to \{ b \}} \)
	can be computed in polynomial time as in the case \( i = 1 \) by the orbit-stabilizer algorithm. Finally, we need to take a union of at most \( i \leq k \) cosets
	\( P_1\pi_1, \dots, P_s\pi_s \). Since \( P_{A_i \to C} \) is a coset, we can take \( \pi_1 \) as a representative
	and \( P_{A_i \to A_i} = \langle P_1, P_2\pi_2\pi_1^{-1}, \dots, P_s\pi_s\pi_1^{-1} \rangle \).

	There are at most \( 2^k \) table entries, and we can compute each entry in time polynomial in \( |\Omega| \),
	so the algorithm works in time polynomial in \( 2^k \) and \( |\Omega| \).
\end{proof}

\begin{corollary}[{\cite[Corollary~5.1]{absyl}}]\label{cstrings}
	Let \( P \leq \Sym(\Omega) \) be given by permutation generators, and let \( f_1, f_2 : \Omega \to \{ 1, \dots, l \} \), \( l \geq 2 \), be two strings.
	If for \( j = 1, 2 \) and some \( k \) we have \( \sum_{i=1}^{l-1} |f_j^{-1}(i)| \leq k \), then \( \mathrm{Iso}_P(f_1, f_2) \)
	can be computed in time polynomial in \( 2^k \) and \( |\Omega| \).
\end{corollary}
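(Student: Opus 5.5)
We reduce the string problem to a sequence of applications of Lemma~\ref{lsubset}, treating one letter of the alphabet at a time. The key point is that \( \pi \in P \) satisfies \( f_1^\pi = f_2 \) if and only if \( (f_1^{-1}(i))^\pi = f_2^{-1}(i) \) for every \( i = 1, \dots, l \). Indeed, \( f_1^\pi(x) = f_1(x^{\pi^{-1}}) \), so \( f_1^\pi = f_2 \) says exactly that \( \pi \) maps each fibre \( f_1^{-1}(i) \) onto \( f_2^{-1}(i) \). Since the fibres partition \( \Omega \), the condition for \( i = l \) follows from the conditions for \( i = 1, \dots, l-1 \). We may therefore ignore the last letter, whose fibre may be huge, and only need to handle the small sets \( A_i = f_1^{-1}(i) \) and \( B_i = f_2^{-1}(i) \) for \( i \leq l-1 \), each of size at most \( k \).

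The algorithm runs as follows.
\begin{enumerate}[(1)]
	\item If \( |A_i| \neq |B_i| \) for some \( i \leq l-1 \), output that \( \mathrm{Iso}_P(f_1, f_2) \) is empty.
	\item Otherwise set \( Q_0 = P \). For \( i = 1, \dots, l-1 \), compute \( \mathrm{Iso}_P(f_1, f_2) \) as an iterated coset intersection
	\[ Q_i = \{ \pi \in Q_{i-1} \mid A_i^\pi = B_i \}. \]
	Then \( Q_{l-1} = \mathrm{Iso}_P(f_1, f_2) \).
\end{enumerate}

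The main obstacle is that Lemma~\ref{lsubset} is stated for a group, while \( Q_{i-1} \) is in general a coset \( R\sigma \) of a subgroup \( R \leq P \). We reduce to the group case directly. For \( \pi = \rho\sigma \) with \( \rho \in R \), the condition \( A_i^{\rho\sigma} = B_i \) is equivalent to \( A_i^\rho = B_i^{\sigma^{-1}} \). Hence
\[ Q_i = R_{A_i \to B_i^{\sigma^{-1}}} \cdot \sigma. \]
This set is computed by one call to Lemma~\ref{lsubset} for the group \( R \), given by its generators, and the sets \( A_i \) and \( B_i^{\sigma^{-1}} \), both of size at most \( k \). Each call takes time polynomial in \( 2^k \) and \( |\Omega| \). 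If some \( Q_i \) is empty, we stop and report emptiness.

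Alternatively, we can avoid cosets altogether by gluing all letters into one subset problem. Set \( \Omega' = \Omega \times \{ 1, \dots, l-1 \} \) and let \( P \) act diagonally on the first coordinate. Take \( A = \bigcup_i A_i \times \{ i \} \) and \( B = \bigcup_i B_i \times \{ i \} \). Then a single application of Lemma~\ref{lsubset} to \( P \leq \Sym(\Omega') \) computes \( \mathrm{Iso}_P(f_1, f_2) \), since \( |A| \leq k \) and \( |\Omega'| \leq l|\Omega| \).

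In either version the number of nonempty small fibres is at most \( k \), so the empty ones can be skipped. The total running time is polynomial in \( 2^k \) and \( |\Omega| \); the letters with empty fibres are also harmless for the glued version, since \( l \) can be replaced by the number of letters actually occurring.
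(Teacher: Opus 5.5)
Your proof is correct and is essentially the paper's argument: the paper sets \(P_0 = P\) and \(P_i = (P_{i-1})_{f_1^{-1}(i) \to f_2^{-1}(i)}\) for \(i = 1, \dots, l-1\) via Lemma~\ref{lsubset}, and observes that \(\mathrm{Iso}_P(f_1, f_2) = P_{l-1}\). Your explicit reduction of the coset case to the group case, \(Q_i = R_{A_i \to B_i^{\sigma^{-1}}}\sigma\), and your remark about skipping empty fibres fill in details the paper leaves implicit, and your single-call variant on \(\Omega \times \{1, \dots, l-1\}\) is also valid.
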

\begin{proof}
	Define \( P_0 = P \) and \( P_i = (P_{i-1})_{f_1^{-1}(i) \to f_2^{-1}(i)} \) for \( i = 1, \dots, l-1 \).
	We can compute \( P_i \), \( i = 1, \dots, l-1 \), in time polynomial in \( 2^k \) and \( |\Omega| \) by Lemma~\ref{lsubset}.
	Now it is left to note that \( \mathrm{Iso}_P(f_1, f_2) = P_{l-1} \).
\end{proof}

The following lemma establishes some basic polynomial-time algorithms for representations of finite groups.
We note that these tasks can be performed in polynomial time in a much wider setting, see, for instance,~\cite[Chapter~7]{holt},
but in our case elementary methods suffice.

\begin{lemma}\label{ldecomp}
	Let \( H \) and \( A \) be finite groups given by their Cayley tables, and assume that \( A \)
	is an elementary abelian \( p \)-group for \( p \) not dividing \( |H| \).
	\begin{enumerate}
		\item Given irreducible representations \( \alpha, \beta : H \to \Aut(A) \), we can
			test in polynomial time whether \( \alpha \sim \beta \) or not.
		\item If we are given a (not necessarily irreducible) representation \( \alpha : H \to \Aut(A) \),
			then there are at most \( |A| \) irreducible submodules of \( A \) and they can be enumerated in polynomial time.
			In particular, we can decompose \( \alpha \) into a direct sum of its irreducible constituents in polynomial time.
	\end{enumerate}
\end{lemma}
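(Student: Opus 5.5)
Both parts reduce to enumeration that is polynomial in \( |A| \) and \( |H| \). The one structural fact we need is that an irreducible \( \mathbb{F}_pH \)-module is cyclic, i.e.\ generated by any nonzero vector.

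For part~(1), irreducibility of \( \alpha \) means \( A \) is generated as an \( H \)-module by any nonzero vector \( a \in A \). An equivalence \( \psi \) with \( \alpha^\psi = \beta \) satisfies \( \alpha(h)\psi = \psi\beta(h) \) for all \( h \in H \). Such a \( \psi \) is therefore an \( \mathbb{F}_pH \)-module map from \( (A,\alpha) \) to \( (A,\beta) \), and it is determined by the image \( b = a^\psi \). The procedure is:
\begin{enumerate}
\item Fix a nonzero \( a \in A \).
\item For each of the at most \( |A| \) candidates \( b \in A \setminus \{0\} \), attempt to define \( \psi \) by \( (a^{\sum_h c_h \alpha(h)})^\psi = b^{\sum_h c_h\beta(h)} \).
\item Concretely, run a breadth-first search over \( A \): start from the pair \( (a,b) \), and extend along the generators \( \alpha(h) \) versus \( \beta(h) \) and along addition of pairs. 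This builds the relation \( \{ (x, y) \} \), which is the submodule of \( A\oplus A \) generated by \( (a,b) \).
\item Check that this relation is the graph of a bijection of \( A \).
\end{enumerate}
The relation has at most \( |A|^2 \) pairs, so each candidate \( b \) costs polynomial time. If the relation is the graph of a function, that function is automatically additive and satisfies \( \alpha(h)\psi = \psi\beta(h) \). It is then bijective because it is nonzero, and by Schur-type reasoning its kernel and image are submodules of irreducible modules. Hence \( \alpha\sim\beta \) if and only if some \( b \) succeeds.

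For part~(2), the irreducible submodules are exactly the cyclic submodules \( \langle a\rangle_H \) that are minimal, since every irreducible submodule is generated by any of its nonzero vectors. The procedure is:
\begin{enumerate}
\item For each nonzero \( a \in A \), compute \( \langle a \rangle_H \), the closure of \( \{a\} \) under addition and under all \( \alpha(h) \), in polynomial time.
\item The submodule \( \langle a \rangle_H \) is irreducible if and only if \( \langle x\rangle_H = \langle a\rangle_H \) for every nonzero \( x \in \langle a\rangle_H \). This is checked in polynomial time.
\item Each irreducible submodule arises from at least one \( a \), so there are at most \( |A| \) of them, and deduplicating the resulting list gives the enumeration.
\end{enumerate}

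The decomposition is built greedily. Maintain \( B = S_1\oplus\dots\oplus S_r \), and while \( B\neq A \), choose an irreducible submodule \( S \) from the list with \( S\cap B = 0 \), then set \( B := B+S \). Such an \( S \) exists by Maschke's theorem~\cite[Theorem~3.3.1]{gorenstein}. Since \( p\nmid|H| \), \( B \) has an \( H \)-invariant complement \( C \neq 0 \), and any irreducible \( S\le C \) satisfies \( S \cap B = 0 \). Directness and intersections are checked by listing elements. Each restriction \( \alpha|_{S_i} \) then gives an irreducible constituent.

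The main delicate point is part~(1): we must justify that an equivalence is determined by the image of a single vector and that the closure construction detects it. Everything else is straightforward enumeration, polynomial because \( |A| \) is part of the Cayley table input size.
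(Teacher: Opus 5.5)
Your proposal is correct and takes essentially the same route as the paper. In part~1 both arguments use that an irreducible module is cyclic and brute-force over the image of a generator: you fix \( a \), vary \( b \), and make the extension check explicit via the submodule of \( A \oplus A \) generated by \( (a,b) \), while the paper ranges over pairs \( (u,v) \) and only asserts the check is polynomial. In part~2 both enumerate cyclic submodules, test their minimality by brute force, and build the decomposition greedily; your appeal to Maschke's theorem for the existence of \( S \) with \( S \cap B = 0 \) fills in the step the paper calls clear.
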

\begin{proof}
	We prove part~1 first. Note that \( \alpha \) gives \( A \) a structure of a right \( H \)-module.
	Since this module is irreducible, it is cyclic, i.e.\ it can be generated by an \( \alpha(H) \)-orbit of one vector.
	Same reasoning applies to \( \beta \), and \( \alpha \sim \beta \) if and only if the corresponding modules are isomorphic.
	We can check the latter by bruteforce: for every vectors \( u, v \in A \) such that orbits \( u^{\alpha(H)} \) and \( v^{\alpha(H)} \) generate \( A \)
	as a group, check if the map \( \phi : A \to A \) with \( \phi(u) = v \) can be extended to an isomorphism of \( H \)-modules. There are at most \( |A|^2 \)
	possibilities for \( u \) and \( v \), and for each choice the check can be performed in polynomial time.

	To prove part~2 we again view \( A \) as an \( H \)-module. Every irreducible submodule is generated by an orbit of a single vector,
	so there are at most \( |A| \) irreducible submodules. For each submodule generated by an orbit of a single vector we can check in polynomial
	time whether it is irreducible or not; it suffices to check that it does not contain a proper nontrivial submodule, which can again be done by bruteforce.
	Therefore we can enumerate all irreducible \( H \)-submodules of \( A \).

	We can decompose \( A \) into a direct sum of irreducibles by a greedy algorithm, which will compute a growing series
	of submodules \( A_i \), \( i = 1, \dots, k \), where \( k \) is the number of irreducibles summands in the decomposition, and \( A_k = A \).
	Set \( A_1 \) to be any irreducible submodule of \( A \). If \( A_i < A \), then find an irreducible submodule \( M \) of \( A \)
	such that \( M \) does not lie in \( A_i \); we define \( A_{i+1} = A_i \oplus M \). It is clear that this algorithm works correctly
	and will find a decomposition of \( A \) into irreducible components in polynomial time. Projection of \( \alpha \) onto these components
	gives a decomposition of \( \alpha \) into irreducible constituents.
\end{proof}

We are ready to solve the representation-transporting automorphisms problem in the elementary abelian case.

\begin{proposition}[cf. {\cite[Theorem~5.3]{absyl}}]\label{elabcase}
	Let \( H \) and \( A \) be finite groups given by their Cayley tables, and assume that \( A \)
	is an elementary abelian \( p \)-group for \( p \) not dividing \( |H| \).
	Suppose \( P \leq \Aut(H) \) is given by permutation generators, and we are given representations \( \alpha, \beta : H \to \Aut(A) \).
	Then \( P_{\alpha \to \beta} = \{ \phi \in P \mid \alpha^\phi \sim \beta \} \) can be computed in polynomial time.
\end{proposition}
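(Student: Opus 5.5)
The plan is to reduce the problem to string isomorphism, as in Corollary~\ref{cstrings}. By Maschke's theorem, \( \alpha \) and \( \beta \) are determined up to \( \sim \) by their irreducible constituents and multiplicities. So the first step is to list representatives of all equivalence types of irreducible representations of \( H \) over \( \mathbb{Z}/p\mathbb{Z} \) whose dimension is at most \( d = \dim A \). This list has at most \( |H| \) members.

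To build the list, use Lemma~\ref{ldecomp}(2) to decompose \( \alpha \) and \( \beta \) into irreducible constituents. Then apply every generator of \( P \) to these constituents (the \( \Aut(H) \)-action is \( \gamma \mapsto \gamma^\phi \)), and take the closure. The closure is computed by an orbit computation: at each step we decompose and test equivalence of irreducibles using Lemma~\ref{ldecomp}(1). Because the action of \( \Aut(H) \) preserves irreducibility and dimension, the resulting set \( \Omega \) of equivalence classes is \( P \)-invariant. Its size is at most \( |H| \), and \( P \) acts on it by permutations that can be computed in polynomial time. This gives a homomorphism \( P \to \Sym(\Omega) \).

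Next define strings \( f_\alpha, f_\beta : \Omega \to \{1,\dots,l\} \) that record multiplicities. Here \( f_\alpha(\omega) \) is the multiplicity of \( \omega \) in \( \alpha \), shifted so that multiplicity zero maps to the letter \( l \). Then \( \alpha^\phi \sim \beta \) holds exactly when the image of \( \phi \) carries \( f_\alpha \) to \( f_\beta \). One has to check the direction convention, \( f^\pi(x) = f(x^{\pi^{-1}}) \), against the definition of \( \alpha^\phi \). So \( P_{\alpha\to\beta} \) is the preimage in \( P \) of \( \mathrm{Iso}_{\bar P}(f_\alpha, f_\beta) \), where \( \bar P \) is the image of \( P \). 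Taking this preimage is polynomial: take the kernel generators plus lifts of the generators and of the representative.

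To apply Corollary~\ref{cstrings}, we need the total number of letters other than \( l \) to be small. The number of nonzero-multiplicity classes is at most the number of constituents, which is at most \( \dim A = \log_p|A| \le \log|A| \). Hence \( 2^k \le |A| \) and the running time is polynomial. If \( f_\alpha \) and \( f_\beta \) have different multisets of letters, the answer is empty immediately.

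The main obstacle is building \( \Omega \) and the \( P \)-action on it in polynomial time. We need to recognise which class \( \gamma^\phi \) falls in, which means testing equivalence of irreducibles on possibly different subspaces. The fix is to realise every irreducible on its own module of size at most \( |A| \) and to compare by the brute force of Lemma~\ref{ldecomp}(1), after identifying underlying modules of equal dimension via a fixed basis isomorphism. An alternative that avoids orbit closure is to use only the classes actually occurring in \( \alpha \) or \( \beta \). But then \( \Omega \) is not \( P \)-invariant, so I prefer the closure, which stays of size at most \( |H| \).
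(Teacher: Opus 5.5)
Your proposal is correct and follows the paper's proof essentially step for step. You decompose \( \alpha,\beta \) via Lemma~\ref{ldecomp}, close the set of constituents under \( P \) to get \( \Omega \) with \( |\Omega|\le|H| \), encode multiplicities as strings, and apply Corollary~\ref{cstrings} with \( k\le\dim A \) so that \( 2^k\le|A| \). The paper leaves implicit the extra points you handle explicitly: sending multiplicity zero to the distinguished letter \( l \), pulling \( \mathrm{Iso} \) back along the possibly non-faithful map \( P\to\Sym(\Omega) \), and comparing irreducibles realised on different subspaces; your treatment of each is sound.
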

\begin{proof}
	By Lemma~\ref{ldecomp}~(2), we can decompose \( \alpha \) and \( \beta \) into sums of irreducible representations in polynomial time.
	Let \( \Omega \) be the union of orbits of these representations under the action of \( P \).
	Since \( H \) has at most \( |H| \) irreducible representations over \( \mathbb{Z}/p\mathbb{Z} \) and we can
	test if two given irreducible representations are equivalent by Lemma~\ref{ldecomp}~(1), the set \( \Omega \)
	can be computed in polynomial time. We have \( |\Omega| \leq |H| \).

	Let \( m \) be the rank of \( A \), so \( |A| = p^m \).
	Let \( \alpha' : \Omega \to \{ 0, 1, \dots, m \} \) be the string of multiplicities of \( \alpha \),
	that is, \( \alpha'(\phi) \) is the multiplicity of the representation \( \phi \in \Omega \) in \( \alpha \).
	Clearly \( \alpha' \) can be computed in polynomial time,
	and we can similarly define and compute \( \beta' : \Omega \to \{ 0, 1, \dots, m \} \). The group \( P \) acts on \( \Omega \)
	by permuting the irreducible representations, and since two representations are equivalent
	if and only if they have the same decompositions into irreducible constituents we have
	\( P_{\alpha \to \beta} = \mathrm{Iso}_P(\alpha', \beta') \).
	Suppose that \( \alpha \) decomposes into irreducible constituents \( \alpha_i \), \( i = 1, \dots, k \),
	with multiplicities \( n_i \) and degrees \( d_i \).
	Then \( \sum_{i = 1}^k n_i d_i = m \), in particular, \( \sum_{j=1}^m |(\alpha')^{-1}(j)| \leq m \).
	Similarly, \( \sum_{j=1}^m |(\beta')^{-1}(j)| \leq m \). By Corollary~\ref{cstrings}, we can compute
	\( \mathrm{Iso}_P(\alpha', \beta') \) in time polynomial in the degree of \( P \) and \( 2^m \).
	Since \( 2^m \leq |A| \) and \( A \) is given by its Cayley table, our algorithm works in polynomial time.
\end{proof}

\subsection{General case}

If \( A \) is a finite abelian \( p \)-group written additively, then let \( pA = \{ p \cdot x \mid x \in A \} \)
denote the subgroup of \( A \) consisting of all \( p \)-th powers of elements of~\( A \). Note that it coincides with the Frattini subgroup of \( A \).

\begin{lemma}\label{twohomo}
	Let \( A_1 \) and \( A_2 \) be finite abelian homocyclic \( p \)-groups of different exponents.
	If \( u_{12} \in \Hom(A_1, A_2) \) and \( u_{21} \in \Hom(A_2, A_1) \), then \( u_{12}u_{21} \in p \cdot \Hom(A_1, A_1) \).
\end{lemma}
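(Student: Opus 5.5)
Write \( A_1 \cong (\mathbb{Z}/p^{a}\mathbb{Z})^{r} \) and \( A_2 \cong (\mathbb{Z}/p^{b}\mathbb{Z})^{s} \) with \( a \neq b \). The plan is to show that the composite endomorphism \( u = u_{12}u_{21} \) of \( A_1 \) (first \( u_{12} \), then \( u_{21} \), by the paper's left-to-right convention) sends \( A_1 \) into \( pA_1 \). We then use homocyclicity to lift it through multiplication by \( p \), i.e.\ write \( u = p\cdot v \) with \( v \in \End(A_1) \). Note that \( p\cdot\Hom(A_1,A_1) \) consists exactly of endomorphisms of the form \( x \mapsto p\,v(x) \).

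\textbf{Step 1: the image lies in \( pA_1 \).} There are two cases.

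If \( b < a \), then every element of \( A_2 \) has order at most \( p^b \). Hence \( u_{21} \) maps \( A_2 \) into the subgroup \( \Omega_b(A_1) = \{x \in A_1 \mid p^b x = 0\} \). In the homocyclic group \( A_1 \) this subgroup equals \( p^{a-b}A_1 \subseteq pA_1 \), because \( a-b \geq 1 \).

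If \( b > a \), then \( u_{12} \) maps \( A_1 \) into \( \Omega_a(A_2) = p^{b-a}A_2 \subseteq pA_2 \). Since homomorphisms commute with multiplication by \( p \), we get \( u_{21}(pA_2) = p\,u_{21}(A_2) \subseteq pA_1 \).

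In either case \( u(A_1) \subseteq pA_1 \). The only fact needed here is the standard identity \( \Omega_c(\mathbb{Z}/p^n) = p^{n-c}\mathbb{Z}/p^n \) for \( c \leq n \), applied coordinatewise.

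\textbf{Step 2: lifting.} Choose cyclic generators \( g_1,\dots,g_r \) of \( A_1 \), each of order \( p^a \). For each \( i \), pick \( h_i \in A_1 \) with \( u(g_i) = p h_i \). We would like to define \( v \) by \( v(g_i) = h_i \). For this to extend to a homomorphism, we need \( p^a h_i = 0 \), which holds automatically since \( A_1 \) has exponent \( p^a \). Then \( p\,v(g_i) = u(g_i) \) for every \( i \), so \( u = p\cdot v \).

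The main point needing care is this lifting step. It is here that homocyclicity of \( A_1 \) is used: in general, an endomorphism with image in \( pA \) need not be divisible by \( p \) in \( \End(A) \). If \( A_1 \) had cyclic factors of different orders, the chosen preimage \( h_i \) of a generator of small order might have too large an order. Since all \( g_i \) here have the full exponent \( p^a \), this cannot happen, and Step 1 is routine.
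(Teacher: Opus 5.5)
Your proof is correct and follows essentially the same route as the paper: you first show that \( u_{12}u_{21} \) maps \( A_1 \) into \( pA_1 \) by splitting into the cases where \( A_2 \) has smaller or larger exponent, and then use cyclic generators of the homocyclic group \( A_1 \) to write \( u_{12}u_{21} \) as \( p \) times an endomorphism. Your identification \( \Omega_b(A_1) = p^{a-b}A_1 \) is just a slightly sharper form of the paper's observation that \( pA_1 \) is the subgroup of elements of order strictly smaller than the exponent of \( A_1 \).
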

\begin{proof}
	Let \( e_i \) be the exponent of \( A_i \), \( i = 1, 2 \).
	Suppose that \( e_1 > e_2 \). Then \( \Hom(A_2, A_1) = \Hom(A_2, pA_1) \), since \( pA_1 \) is the subgroup of elements of order strictly smaller than \( e_1 \).
	Let \( a_1, \dots, a_k \) be cyclic generators of \( A_1 \).
	Then \( (u_{12}u_{21})(a_i) = u_{21}(u_{12}(a_i)) \in pA_1 \),
	hence there exist elements \( b_i \in A_1 \) such that \( (u_{12}u_{21})(a_i) = p\cdot b_i \) for \( i = 1, \dots, k \).
	There exists \( \phi \in \Hom(A_1, A_1) \) defined by \( \phi(a_i) = b_i \), \( i = 1, \dots, k \).
	It follows that \( u_{12}u_{21} = p \cdot \phi \in p \cdot \Hom(A_1, A_1) \).

	Suppose that \( e_1 < e_2 \). Then \( \Hom(A_1, A_2) = \Hom(A_1, pA_2) \).
	Since \( u_{21}(pA_2) \leq pA_1 \), we have \( (u_{12}u_{21})(A_1) \leq pA_1 \)
	and by reasoning as in the previous case we derive \( u_{12}u_{21} \in p \cdot \Hom(A_1, A_1) \).
\end{proof}

For \( m \geq 1 \) and a commutative ring \( R \) let \( M_m(R) \) denote the ring of \( m \times m \) matrices over \( R \).

\begin{lemma}\label{homo}
	Let \( A \) be a finite homocyclic \( p \)-group of exponent \( e \) and rank~\( m \).
	Then \( \End(A) \simeq M_m(\mathbb{Z}/e\mathbb{Z}) \) and \( \Aut(A) \simeq \GL_m(\mathbb{Z}/e\mathbb{Z}) \).
	Moreover, let \( \overline{\phantom{a}} : \End(A) \to \End(A/pA) \) be a natural map sending \( \phi \in \End(A) \)
	to \( \overline{\phi} \in \End(A/pA) \) defined by \( \overline{\phi}(x+pA) = \phi(x) + pA \), \( x \in A \).
	Then this map is a surjective ring homomorphism and its restriction \( \overline{\phantom{a}} : \Aut(A) \to \Aut(A/pA) \) is
	a surjective group homomorphism.
\end{lemma}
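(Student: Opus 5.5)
Since \( A \) is homocyclic of exponent \( e \) and rank \( m \), I will fix cyclic generators \( a_1, \dots, a_m \) of \( A \), each of order \( e \), so that \( A \simeq (\mathbb{Z}/e\mathbb{Z})^m \) as abelian groups, and \( A \) is a free module of rank \( m \) over \( \mathbb{Z}/e\mathbb{Z} \). The images of the generators determine an endomorphism \( \phi \), and any choice of images is admissible because every element of \( A \) has order dividing \( e \). Writing \( \phi(a_j) = \sum_i c_{ij} a_i \) with \( c_{ij} \in \mathbb{Z}/e\mathbb{Z} \) gives a bijection \( \End(A) \to M_m(\mathbb{Z}/e\mathbb{Z}) \). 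This bijection is additive and turns composition into matrix multiplication, in the appropriate order. I will take the order compatible with the left-to-right composition convention; if needed, I will pass to transposes, and since \( M_m(R) \) is anti-isomorphic to itself via transposition, this does not matter. Units correspond to units, so \( \Aut(A) \simeq \GL_m(\mathbb{Z}/e\mathbb{Z}) \).

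Next, the map \( \phi \mapsto \overline{\phi} \) is well defined because \( \phi(pA) = p\phi(A) \leq pA \). It is a ring homomorphism directly from the definitions, since both sum and composition are computed pointwise on cosets. Now \( A/pA \) is elementary abelian of rank \( m \) with basis the images \( \overline{a_i} \). In these coordinates \( \overline{\phi} \) has matrix \( (c_{ij} \bmod p) \), so the map is entrywise reduction \( M_m(\mathbb{Z}/e\mathbb{Z}) \to M_m(\mathbb{Z}/p\mathbb{Z}) \). That reduction is surjective, because every residue mod \( p \) lifts to a residue mod \( e \). Here \( p \mid e \) if \( A \neq 0 \); the trivial case is immediate.

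For automorphisms, take \( \overline{\psi} \in \GL_m(\mathbb{Z}/p\mathbb{Z}) \) and lift it to any matrix \( \psi \). Its determinant reduces to a nonzero element mod \( p \), hence is a unit in \( \mathbb{Z}/e\mathbb{Z} \), since \( e \) is a power of \( p \) and the units there are exactly the elements not divisible by \( p \). Therefore \( \psi \) is invertible, using the adjugate formula over the commutative ring \( \mathbb{Z}/e\mathbb{Z} \). Conversely, any unit maps to a unit under a unital ring homomorphism, so the restriction lands in \( \Aut(A/pA) \) and is a surjective group homomorphism.

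The only mildly delicate step is the invertibility-lifting argument, which relies on \( \mathbb{Z}/e\mathbb{Z} \) being local with maximal ideal \( p\mathbb{Z}/e\mathbb{Z} \). An alternative route is a Nakayama-type argument: the lifted \( \psi \) is surjective because \( \psi(A) + pA = A \) forces \( \psi(A) = A \), and then \( \psi \) is bijective because \( A \) is finite.
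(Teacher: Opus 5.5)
Your proposal is correct and follows essentially the same route as the paper: you identify endomorphisms with matrices over \( \mathbb{Z}/e\mathbb{Z} \) via cyclic generators, observe that the bar map is entrywise reduction modulo \( p \), and lift invertibility using that a determinant is a unit in \( \mathbb{Z}/e\mathbb{Z} \) exactly when it is nonzero modulo \( p \). Two of your remarks are small refinements of details the paper glosses over: the handling of composition order via transposes, and justifying well-definedness by \( \phi(pA) \le pA \) (full invariance of \( pA \) rather than mere characteristicity).
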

\begin{proof}
	Let \( a_1, \dots, a_m \) be cyclic generators of \( A \). An endomorphism of \( A \) is completely defined by its
	action on \( a_1, \dots, a_m \). Since every element of \( A \) has order dividing \( e \), for every \( b_1, \dots, b_m \in A \)
	there exists \( \phi \in \End(A) \) such that \( \phi(a_i) = b_i \), \( i = 1, \dots, m \).
	If \( b_i = u_{i1}a_1 + \dots + u_{im}a_m \), \( i = 1, \dots, m \), \( u_{ij} \in \mathbb{Z}/e\mathbb{Z} \),
	then \( \phi \) can be identified with the matrix \( (u_{ij}) \).
	This identification establishes the isomorphism \( \End(A) \simeq M_m(\mathbb{Z}/e\mathbb{Z}) \).

	The group of invertible elements of \( M_m(\mathbb{Z}/e\mathbb{Z}) \) is precisely \( \GL_m(\mathbb{Z}/e\mathbb{Z}) \),
	hence \( \Aut(A) \simeq \GL_m(\mathbb{Z}/e\mathbb{Z}) \). Note that the map \( \overline{\phantom{a}} : \End(A) \to \End(A/pA) \)
	is a well-defined ring homomorphism since \( pA \) is a characteristic subgroup of \( A \).
	If \( \phi \in \End(A) \) is identified with a matrix from \( M_m(\mathbb{Z}/e\mathbb{Z}) \)
	and \( \overline{\phi} \in \End(A/pA) \) is identified with a matrix from \( M_m(\mathbb{Z}/p\mathbb{Z}) \),
	then entities of \( \overline{\phi} \) are equal to corresponding entities of \( \phi \) modulo~\( p \).
	It is clear that this map is surjective.

	Finally, a matrix is invertible if and only if its determinant is invertible.
	Since \( e \) is a power of a prime \( p \), an element from \( \mathbb{Z}/e\mathbb{Z} \)
	is invertible if and only if the corresponding integer is not divisible by~\( p \).
	It is now easy to see that for every matrix from \( \GL_m(\mathbb{Z}/p\mathbb{Z}) \)
	there is a preimage in \( \GL_m(\mathbb{Z}/e\mathbb{Z}) \) with respect to reduction modulo~\( p \),
	so the restriction \( \overline{\phantom{a}} : \Aut(A) \to \Aut(A/pA) \) is surjective.
\end{proof}

The following proposition collects some information on endomorphisms and automorphisms
of finite abelian \( p \)-groups. These facts are well-known to specialists, see the works~\cite{ranum, hillar, mader} and~\cite[Lemma~6.1]{absyl},
but, as far as the author is aware, have not been collected in one place in the manner suitable for our algorithmic applications.
We note that the claim about the Jacobson radical in part~(3) is not used in our further arguments and we provided it for completeness.

\begin{proposition}\label{autab}
	Let \( A \) be a finite abelian \( p \)-group written additively, and let \( A = A_1 \oplus \dots \oplus A_k \)
	be some decomposition into homocyclic components of different exponents.
	\begin{enumerate}
		\item Every endomorphism of \( A \) can be identified with a \( k \times k \) matrix
			\[ U =
			\begin{pmatrix}
				u_{11} & u_{12} & \dots & u_{1k}\\
				\vdots & \vdots & \dots & \vdots\\
				u_{k1} & u_{k2} & \dots & u_{kk}
			\end{pmatrix}
			\]
			where \( u_{ij} \in \Hom(A_i, A_j) \).
			The action of \( U \) on \( (x_1, \dots, x_k) \in A \), \( x_i \in A_i \), \( i = 1, \dots, k \),
			is given by
			\[ 
			xU = (u_{11}(x_1) + u_{21}(x_2) \dots + u_{k1}(x_k), \dots, u_{1k}(x_1) + u_{2k}(x_2) \dots + u_{kk}(x_k)).
			\]
			This action can be interpreted as a multiplication by a matrix, and the composition of endomorphisms
			corresponds to the matrix product.
		\item The endomorphism \( U = (u_{ij}) \) is an automorphism if and only if \( u_{ii} \in \Aut(A_i) \) for all \( i = 1, \dots, k \).
		\item There is a natural map
			\[ \Lambda : \End(A) \to \End(A_1 / pA_1) \oplus \dots \oplus \End(A_k / pA_k) \]
			which sends \( U = (u_{ij}) \in \End(A) \) to \( \Lambda(U) = (\overline{u_{11}}, \dots, \overline{u_{kk}}) \),
			where \( \overline{u_{ii}} \in \End(A_i / pA_i) \) is defined by \( \overline{u_{ii}}(x + pA_i) = u_{ii}(x) + pA_i \),
			\( i = 1, \dots, k \).

			This map is a surjective ring homomorphism, and its kernel \( J \) is equal to the set of matrices of the following form
			\[ \begin{pmatrix}
				p \cdot u_{11} & u_{12} & \dots & u_{1k}\\
				\vdots & \vdots & \dots & \vdots\\
				u_{k1} & u_{k2} & \dots & p \cdot u_{kk}
			   \end{pmatrix}
			\]
			where \( u_{ij} \in \Hom(A_i, A_j) \). The Jacobson radical of \( \End(A) \) is equal to \( J \).
		\item The restriction of \( \Lambda \) to \( \Aut(A) \) induces a surjective homomorphism of groups
			\[ \Lambda : \Aut(A) \to \Aut(A_1 / pA_1) \times \dots \times \Aut(A_k / pA_k) \]
			and the kernel of this group homomorphism is equal to \( K = 1 + J \), where \( 1 \) denotes the identity automorphism of \( A \).
			The largest normal \( p \)-subgroup of \( \Aut(A) \) is equal to~\( K \).
	\end{enumerate}
\end{proposition}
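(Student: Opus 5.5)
Part (1) is formal. Since \( A = A_1 \oplus \dots \oplus A_k \), an endomorphism \( \phi \) is determined by the restrictions of \( \phi \) composed with the projections, \( u_{ij} = \iota_i \phi \pi_j \in \Hom(A_i, A_j) \). Here \( \iota_i \) is the inclusion and \( \pi_j \) the projection; with the paper's left-to-right composition this matches the row-vector convention \( xU \). Additivity gives the displayed formula. Composition \( \phi\psi \) has entries \( \sum_l \iota_i\phi\pi_l\iota_l\psi\pi_j = \sum_l u_{il}v_{lj} \), using \( \sum_l \pi_l\iota_l = 1 \), so composition is matrix multiplication.

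For parts (3) and (4), I would first check that \( \Lambda \) is a ring homomorphism. Additivity is clear. For multiplicativity, the \( (i,i) \) entry of \( UV \) is \( u_{ii}v_{ii} + \sum_{l\neq i} u_{il}v_{li} \). By Lemma~\ref{twohomo}, each term \( u_{il}v_{li} \) with \( l \neq i \) lies in \( p\cdot\Hom(A_i,A_i) \), which reduces to zero in \( \End(A_i/pA_i) \). So \( \Lambda(UV) = \Lambda(U)\Lambda(V) \). Surjectivity follows from Lemma~\ref{homo}: lift each \( \overline{u_{ii}} \) to some \( u_{ii} \) and take the diagonal matrix. The kernel consists of the \( U \) with \( \overline{u_{ii}} = 0 \), i.e.\ \( u_{ii}(A_i) \leq pA_i \). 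As in the proof of Lemma~\ref{twohomo}, this is equivalent to \( u_{ii} \in p\cdot\End(A_i) \), which gives the description of \( J \).

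Next I would show \( J \) is nilpotent. This is the step I expect to be the main obstacle. The plan is to order the components so that the exponents increase, \( e_1 < \dots < e_k \), and filter \( A \) by the subgroups \( A^{(s)} = \bigoplus_i p^{s}A_i \), possibly combined with a secondary grading by \( i \). Then show that an element of \( J \) strictly raises a suitable weight, so that some power \( J^N \) vanishes. If this bookkeeping becomes messy, a cleaner route is to show that every element of \( J \) is nilpotent modulo \( p\End(A) \). One shows \( J/p\End(A) \) is spanned by the strictly off-diagonal parts, and that the products which survive modulo \( p \) only move "downward" (or only "upward") in exponent. This makes \( J/p\End(A) \) a nilpotent ideal, and \( p\End(A) \) is nilpotent since \( p^{N}=0 \) on \( A \).

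Once \( J \) is a nilpotent ideal, \( \End(A)/J \) is a product of matrix rings over \( \mathbb{Z}/p\mathbb{Z} \), which is semisimple. Hence \( J \) is the Jacobson radical. Moreover \( 1+J \) consists of units, because \( (1+x)^{-1} = \sum (-x)^n \).

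For part (2) and the group statement in (4): \( U \) is invertible iff \( \Lambda(U) \) is invertible, since \( J \) is nilpotent and the units of \( \End(A) \) are exactly the preimages of units modulo the radical. Also \( \overline{u_{ii}} \) is invertible iff \( u_{ii} \in \Aut(A_i) \), by the determinant argument in Lemma~\ref{homo}. So \( \Lambda \) restricts to a homomorphism \( \Aut(A) \to \prod \Aut(A_i/pA_i) \). It is surjective because diagonal lifts of invertible blocks are automorphisms, and its kernel is \( 1+J = K \). Finally, \( K \) is a \( p \)-group: for \( x \in J \) with \( x^{N}=0 \), we have \( (1+x)^{p^t} = \sum \binom{p^t}{j}x^j \), which is \( 1 \) for large \( t \) because the binomial coefficients for \( j<N \) are divisible by a high power of \( p \) and \( p^{M}\End(A)=0 \). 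Since \( K \) is normal as a kernel, it lies in the largest normal \( p \)-subgroup \( O_p(\Aut(A)) \). Conversely, the image of \( O_p(\Aut(A)) \) is a normal \( p \)-subgroup of \( \prod \GL_{m_i}(p) \), which is trivial for each factor. For \( \GL_m(p) \) this holds because a normal \( p \)-subgroup fixes a nonzero subspace that is invariant under the irreducible action, hence is trivial. So \( O_p(\Aut(A)) \leq K \).
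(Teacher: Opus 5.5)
Your route is genuinely different from the paper's, and it is sound except for one step. The paper cites Mader for parts~1 and~2 and then uses part~2 to identify the radical. For \( x \in J \), the diagonal blocks of \( 1+x \) are \( 1 + p\cdot u_{ii} \), which are automorphisms because \( p\cdot u_{ii} \) is nilpotent. Hence \( 1+x \in \Aut(A) \), so \( J \) consists of quasiregular elements and equals the radical. That \( K \) is a \( p \)-group is read off from \( |K| = |J| \). You reverse the logic: you prove \( J \) nilpotent and deduce part~2, the radical and part~4 from that, which makes the argument self-contained (no appeal to Mader). Everything downstream of nilpotency is correct. Your justification that \( \GL_m(p) \) has no nontrivial normal \( p \)-subgroup is more careful than the paper's one-line remark. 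The binomial computation for \( K \) is unnecessary, since \( |K| = |J| \) is a power of \( p \) anyway.

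The gap is that nilpotency of \( J \), on which everything else rests, is only asserted. Your first plan fails with the filtration by \( p^sA \) alone. For \( A = \mathbb{Z}/p \oplus \mathbb{Z}/p^2 \), the reduction map from the second summand onto the first lies in \( J \) but does not map \( A \) into \( pA \).

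Your second plan is right, but Lemma~\ref{twohomo} only covers round trips \( i \to j \to i \). Write \( e_i \) for the exponent of \( A_i \). What you need is this extension: if \( e_j > \max(e_i, e_l) \) or \( e_j < \min(e_i, e_l) \), then \( uv \in p\cdot\Hom(A_i, A_l) \) for all \( u \in \Hom(A_i, A_j) \), \( v \in \Hom(A_j, A_l) \). It is proved like Lemma~\ref{twohomo}, using cyclic generators \( a_s \) of \( A_i \).
\begin{itemize}
\item In the first case, write \( u(a_s) = (e_j/e_i)c_s \). Then \( a_s \mapsto (e_j/(pe_i))\,v(c_s) \) defines a homomorphism, because \( e_l \) divides \( e_j/p \).
\item In the second case, \( uv(a_s) \) has order dividing \( e_j < e_l \), so \( uv(a_s) = (e_l/e_j)f_s \) with \( f_s \in A_l \). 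Then \( a_s \mapsto (e_l/(pe_j))f_s \) is a homomorphism, because \( pe_j \) divides \( e_i \).
\end{itemize}

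Now work modulo the two-sided ideal \( p\End(A) \subseteq J \). A product of \( k \) elements of \( J \) is congruent to a sum of compositions along index paths \( i_0 \to i_1 \to \dots \to i_k \) with consecutive indices distinct. Such a path has \( k+1 \) vertices but there are only \( k \) distinct exponents, so it is not strictly monotone in exponent. It therefore has an interior local extremum, and by the lemma the composition lies in \( p\cdot\Hom(A_{i_0}, A_{i_k}) \). Hence \( J^k \subseteq p\End(A) \), and \( J^{kb} = 0 \) where \( p^b \) is the exponent of \( A \). With this lemma and path count written out, your proof is complete.
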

\begin{proof}
	Parts~1 and~2 are just a restatement of~\cite[Theorem~3.2~(1) and~(2)]{mader}; we note that~\cite{mader} uses column-vectors
	and includes trivial homocyclic summands, while we exclude trivial summands and adapt notation to use row-vectors.

	To prove part~3, first note that since \( pA_i \) is a characteristic subgroup of \( A_i \), the maps \( \overline{u_{ii}} \)
	and \( \Lambda \) are well-defined. We need to check that \( \Lambda \) is a ring homomorphism.
	The fact that \( \Lambda \) preserves addition is clear. Now let \( U = (u_{ij}) \in \End(A) \)
	and \( V = (v_{ij}) \in \End(A) \). The \( i \)-th diagonal element of \( UV \) is equal to
	\[ (UV)_{ii} = \sum_{j=1}^k u_{ij}v_{ji} = u_{ii}v_{ii} + \sum_{j\neq i} u_{ij}v_{ji}. \]
	By Lemma~\ref{twohomo} applied to \( u_{ij}v_{ji} \), \( j \neq i \) we derive
	\[ (UV)_{ii} \in u_{ii}v_{ii} + p \cdot \Hom(A_i, A_i), \]
	hence \( \overline{(UV)_{ii}} = \overline{u_{ii}v_{ii}} = \overline{u_{ii}} \cdot \overline{v_{ii}} \), \( i = 1, \dots, k \).
	Therefore \( \Lambda(UV) = \Lambda(U)\Lambda(V) \) and \( \Lambda \) is a ring homomorphism.
	\( \Lambda \) is surjective since by Lemma~\ref{homo} the maps \( \overline{\phantom{u}} : \End(A_i) \to \End(A_i/pA_i) \), \( i = 1, \dots, k \),
	are surjective.

	We want to compute the kernel \( J \) of \( \Lambda \).
	The kernel \( J \) consists of those matrices whose diagonal elements \( v_{ii} \) satisfy \( \overline{v_{ii}} = 0 \).
	This is equivalent to \( v_{ii}(x) \in pA_i \) for all \( x \in A_i \), which is equivalent to \( v_{ii} = p \cdot u_{ii} \)
	for some \( u_{ii} \in \Hom(A_i, A_i) \). Hence \( J \) consists exactly of those matrices which have homomorphisms
	of the form \( p \cdot u_{ii} \), \( i = 1, \dots, k \) on the diagonal.

	Since \( \Lambda \) maps onto a direct sum of matrix rings over fields, the Jacobson radical of \( \End(A) \) lies in \( J \).
	On the other hand, if \( u_{ii} \in \End(A_i) \), then \( p \cdot u_{ii} \) is nilpotent and hence \( 1 + p \cdot u_{ii} \) is invertible
	for \( i = 1, \dots, k \). Hence by part~2, every element of the form \( 1 + x \), \( x \in J \), lies in \( \Aut(A) \),
	so \( 1 + x \) is invertible and hence \( x \) is quasiregular.
	Thus \( J \) consists of quasiregular elements and therefore it is equal to the Jacobson radical of \( \End(A) \), see~\cite[Theorem~1.2.3]{herstein}.

	It is left to prove part~4.
	The fact that \( \Lambda : \Aut(A) \to \Aut(A_1 / pA_1) \times \dots \times \Aut(A_k / pA_k) \)
	is a surjective group homomorphism follows from part~(2) and Lemma~\ref{homo}.
	An element \( 1 + U \), \( U \in \End(A) \), lies in the kernel of \( \Lambda \) if and only if \( \Lambda(1+U) = 1 \).
	This is equivalent to \( \Lambda(U) = 0 \), so \( U \in J \). Hence the kernel \( K \leq \Aut(A) \) of the group homomorphism \( \Lambda \)
	is equal to \( 1 + J \). Since \( J \) is an abelian subgroup of \( \End(A) \), its order is a power of \( p \), so \( K \) is a \( p \)-group.
	Now, \( \Aut(A_1 / pA_1) \times \dots \times \Aut(A_k / pA_k) \) is a direct product of matrix rings over \( \mathbb{Z}/p\mathbb{Z} \),
	and it contains no nontrivial normal \( p \)-subgroups. Hence \( K \) is the largest normal \( p \)-subgroup of \( \Aut(A) \).
\end{proof}

The next result gives a criterion for equivalence of representations of a \( p' \)-group on an abelian \( p \)-group
in terms of equivalences of representations on elementary abelian \( p \)-groups.

\begin{lemma}\label{elabred}
	Let \( A \) be a finite abelian \( p \)-group written additively, and let \( A = A_1 \oplus \dots \oplus A_k \)
	be some decomposition into homocyclic components of different exponents.
	Let \( H \) be a finite \( p' \)-group.
	In the notation of Proposition~\ref{autab}, let \( \Lambda \) be the homomorphism from \( \Aut(A) \)
	onto \( \Aut(A_1 / pA_1) \times \dots \times \Aut(A_k / pA_k) \), and let \( \Lambda_i : \Aut(A) \to \Aut(A_i/pA_i) \)
	be the composition of \( \Lambda \) with the projection onto \( \Aut(A_i/pA_i) \) for \( i = 1, \dots, k \).
	Then for representations \( \alpha, \beta : H \to \Aut(A) \) we have \( \alpha \sim \beta \) if and only if
	\( \alpha \circ \Lambda_i \sim \beta \circ \Lambda_i \) for all \( i = 1, \dots, k \).
\end{lemma}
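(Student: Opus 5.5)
The forward direction is immediate. If \( \alpha^\psi = \beta \) for some \( \psi \in \Aut(A) \), then since \( \Lambda \) is a group homomorphism we get \( (\alpha\circ\Lambda_i)^{\Lambda_i(\psi)} = \beta \circ \Lambda_i \) for every \( i \). Here \( \Lambda_i(\psi) \in \Aut(A_i/pA_i) \).

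The converse is the real content. I will do it in two stages: first lift the componentwise equivalences to an automorphism of \( A \) that agrees with \( \alpha \) and \( \beta \) modulo the kernel \( K \); then remove the discrepancy inside \( K \) by a coprime cohomology argument.

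\emph{Stage 1: lifting.} Suppose \( \psi_i \in \Aut(A_i/pA_i) \) satisfy \( (\alpha\circ\Lambda_i)^{\psi_i} = \beta\circ\Lambda_i \) for all \( i \). By Proposition~\ref{autab}~(4), \( \Lambda \) is surjective onto the product, so there is \( \psi \in \Aut(A) \) with \( \Lambda(\psi) = (\psi_1,\dots,\psi_k) \). Replacing \( \alpha \) by \( \alpha^\psi \), we may assume \( \alpha\circ\Lambda = \beta\circ\Lambda \). That is, \( \alpha(h) \equiv \beta(h) \) modulo the kernel \( K = 1+J \), which is a normal \( p \)-subgroup of \( \Aut(A) \).

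\emph{Stage 2: conjugating inside \( K \).} Set \( L = K\alpha(H) \leq \Aut(A) \). Then \( \beta(H) \leq L \), because \( \beta(h) \in K\alpha(h) \). Since \( |H| \) is coprime to \( p \), both \( \alpha(H) \) and \( \beta(H) \) are complements to the normal Sylow \( p \)-subgroup \( K \) of \( L \). By the Schur--Zassenhaus theorem they are conjugate; conjugacy is unconditional here because \( K \) is a \( p \)-group and hence solvable.

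This alone is not enough: we need a conjugating element \( \kappa \in K \) with \( \alpha(h)^\kappa = \beta(h) \) for each \( h \), not merely \( \alpha(H)^\kappa = \beta(H) \). I would handle this with the standard refinement. Define \( c(h) = \alpha(h)^{-1}\beta(h) \in K \). This is a 1-cocycle for the action of \( H \) on \( K \) by conjugation through \( \alpha \). Showing it is a coboundary gives exactly \( \kappa \in K \) with \( \beta(h) = \kappa^{-1}\alpha(h)\kappa \) for all \( h \).

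To prove \( c \) is a coboundary, I will induct along a central series of \( K \), or along the filtration \( 1+J^t \) (note that \( J \) is nilpotent, since \( K \) is a \( p \)-group). At each step the relevant layer is an abelian \( p \)-group with an \( H \)-action, and its first cohomology vanishes because \( |H| \) is coprime to \( p \). Equivalently, one can apply Schur--Zassenhaus to the semidirect product \( K \rtimes H \) (with \( H \) acting via \( \alpha \)) and compare the complements \( \{(1,h)\} \) and \( \{(c(h),h)\} \). Conjugacy of these two complements by an element of \( K \) yields the cocycle identity. Combining this with Stage 1 gives \( \alpha^{\psi\kappa} = \beta \).

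The main obstacle is this Stage~2 step: one must make sure the conjugating element lies in \( K \) and realizes the equivalence pointwise in \( h \). The semidirect-product formulation of Schur--Zassenhaus is what I would use, since it gives exactly a \( K \)-conjugacy between the two complements. The map \( h\mapsto (c(h),h) \) must also be checked to be a homomorphism; this is a routine verification using that \( \beta \) is a homomorphism.
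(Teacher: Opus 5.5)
Your proof is correct. Stage~1 is the same as the paper's; the genuine difference is in Stage~2, and the obstacle you identify there is not real. The paper stops exactly where you say ``this alone is not enough.'' It takes \( k \in K \) with \( \alpha(H)^k = \beta(H) \) from Schur--Zassenhaus and replaces \( \alpha \) by \( \alpha^k \). This keeps \( \alpha\circ\Lambda = \beta\circ\Lambda \), because \( k \in \ker\Lambda \). It then observes that \( \alpha(h) \in \beta(h)K \cap \beta(H) = \beta(h)(K \cap \beta(H)) = \{\beta(h)\} \), since a complement to \( K \) meets each coset of \( K \) in exactly one element. So setwise conjugacy does suffice, and pointwise equality comes for free.

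Your semidirect-product argument uses the same fact in disguise. Conjugating the complement \( \{(h,1)\} \) to \( \{(h,c(h))\} \) by \( (1,\kappa) \) forces \( c(h) = (\kappa^{-1})^{\alpha(h)}\kappa \) for each \( h \) (the coboundary relation, not the ``cocycle identity'' as you wrote). The reason is precisely that an element of a complement is determined by its \( H \)-coordinate, i.e.\ by its class modulo \( K \).

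Your route frames the step as the vanishing of nonabelian \( H^1(H,K) \) for coprime actions, which is conceptually standard. It needs the auxiliary group \( K \rtimes H \), the check that \( h \mapsto (c(h),h) \) is a homomorphism, and the reduction of the conjugating element from \( K \rtimes H = HK \) to \( K \). The paper's argument stays inside \( \Aut(A) \) and finishes in one line. The alternative you sketch via the filtration \( 1+J^t \) is unnecessary.
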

\begin{proof}
	If \( \alpha \sim \beta \), then there exists \( \psi \in \Aut(A) \) such that \( \alpha^\psi = \beta \).
	By applying \( \Lambda_i \) we yield \( (\alpha \circ \Lambda_i)^{\Lambda_i(\psi)} = \beta \circ \Lambda_i \)
	for all \( i = 1, \dots, k \). By definition this means that \( \alpha \circ \Lambda_i \sim \beta \circ \Lambda_i \) for all \( i = 1, \dots, k \).

	Now assume that \( \alpha \circ \Lambda_i \sim \beta \circ \Lambda_i \) for all \( i = 1, \dots, k \).
	There exist elements \( \psi_i \in \Aut(A_i/pA_i) \) such that \( (\alpha \circ \Lambda_i)^{\psi_i} = \beta \circ \Lambda_i \)
	for \( i = 1, \dots, k \). Since \( \Lambda \) is surjective, there exists an element \( \psi \in \Aut(A) \)
	such that \( \Lambda_i(\psi) = \psi_i \), \( i = 1, \dots, k \). We have \( (\alpha \circ \Lambda)^{\Lambda(\psi)} = \beta \circ \Lambda \).
	By replacing \( \alpha \) by \( \alpha^\psi \), without loss of generality we may assume that \( \alpha \circ \Lambda = \beta \circ \Lambda \).

	Let \( K \) be the kernel of \( \Lambda \).
	By Proposition~\ref{autab}~(4), \( K \) is a \( p \)-group. We have \( \alpha(H)K = \beta(H)K \), and \( \alpha(H), \beta(H) \) are \( p' \)-groups,
	hence by the Schur--Zassenhaus theorem there exists an element \( k \in K \) such that \( \alpha(H)^k = \beta(H) \). By replacing \( \alpha \) by \( \alpha^k \),
	without loss of generality we may assume that \( \alpha(H) = \beta(H) \). Since \( k \) lies in the kernel of \( \Lambda \),
	we still have \( \alpha \circ \Lambda = \beta \circ \Lambda \). So for all \( h \in H \) we have \( \alpha(h)K = \beta(h)K \) and therefore
	\[ \alpha(h) \in (\beta(h)K) \cap \alpha(H) = (\beta(h)K) \cap \beta(H) = \beta(h)(K \cap \beta(H)) = \{ \beta(h) \}, \]
	so \( \alpha(h) = \beta(h) \). Thus \( \alpha \sim \beta \), as required.
\end{proof}

Now we are ready to solve the representation-transporting automorphisms problem for arbitrary abelian \( p \)-groups.

\begin{proposition}\label{ptop}
	Let \( H \) and \( A \) be groups given by their Cayley tables, and assume we are given \( P \leq \Aut(H) \) 
	by permutation generators. Suppose that \( A \) is an abelian \( p \)-group, and \( H \) is a \( p' \)-group.
	If we are given representations \( \alpha, \beta : H \to \Aut(A) \) of \( H \) on \( A \), then one can compute
	\( P_{\alpha \to \beta} = \{ \phi \in P \mid \alpha^\phi \sim \beta \} \) in polynomial time.
\end{proposition}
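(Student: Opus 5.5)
We reduce to the elementary abelian case (Proposition~\ref{elabcase}) using Lemma~\ref{elabred}. First we compute a decomposition \( A = A_1 \oplus \dots \oplus A_k \) into homocyclic components of pairwise different exponents. This takes polynomial time: find cyclic generators of \( A \) from its Cayley table and group them by order. Next we make the maps \( \Lambda_i \) of Proposition~\ref{autab} explicit. For \( \psi \in \Aut(A) \), the diagonal block \( u_{ii} \) is the composition of the embedding \( A_i \to A \), the map \( \psi \), and the projection \( A \to A_i \). Reducing modulo \( pA_i \) gives \( \Lambda_i(\psi) \in \Aut(A_i/pA_i) \). We apply this to the images \( \alpha(h), \beta(h) \), \( h \in H \). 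This yields representations \( \alpha_i = \alpha \circ \Lambda_i \) and \( \beta_i = \beta \circ \Lambda_i \) of \( H \) on the elementary abelian group \( \overline{A_i} = A_i/pA_i \). The Cayley table of \( \overline{A_i} \) has size at most \( |A|^2 \), and all of this is polynomial in \( |A| \) and \( |H| \).

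The key observation is that \( \Lambda_i \) is a group homomorphism that depends only on \( A \), while \( \phi \in P \leq \Aut(H) \) acts on the \( H \) side. Hence \( (\alpha^\phi) \circ \Lambda_i = (\alpha \circ \Lambda_i)^\phi = \alpha_i^\phi \). By Lemma~\ref{elabred}, applied to the representations \( \alpha^\phi \) and \( \beta \), we have \( \alpha^\phi \sim \beta \) if and only if \( \alpha_i^\phi \sim \beta_i \) for all \( i = 1, \dots, k \). Therefore
\[ P_{\alpha \to \beta} = \bigcap_{i=1}^k P_{\alpha_i \to \beta_i}, \]
where each set on the right refers to the representations on \( \overline{A_i} \).

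We compute this intersection iteratively rather than intersecting cosets directly, since coset intersection is not known to be polynomial in general. Set \( P_0 = P \) and \( P_i = (P_{i-1})_{\alpha_i \to \beta_i} \). Each step is an application of Proposition~\ref{elabcase} with the group \( P_{i-1} \) in place of \( P \). That proposition accepts an arbitrary subgroup of \( \Aut(H) \), and here is a small obstacle: \( P_{i-1} \) is a coset \( Q\pi \), not a subgroup. We handle it as follows. Note that \( \phi = q\pi \) satisfies \( \alpha_i^{q\pi} \sim \beta_i \) exactly when \( \alpha_i^q \sim \beta_i^{\pi^{-1}} \). So we apply Proposition~\ref{elabcase} to the group \( Q \) with the representations \( \alpha_i \) and \( \beta_i^{\pi^{-1}} \), and multiply the resulting coset by \( \pi \) on the right. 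The representation \( \beta_i^{\pi^{-1}} \) is computable in polynomial time. If any \( P_i \) is empty, we stop and report that \( P_{\alpha \to \beta} \) is empty. Since \( k \le \log |A| \), the total running time is polynomial. Finally \( P_k = P_{\alpha \to \beta} \).

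The main point to verify carefully is the compatibility identity \( (\alpha^\phi)\circ\Lambda_i = (\alpha\circ\Lambda_i)^\phi \), together with the use of Lemma~\ref{elabred} for each fixed \( \phi \). The latter needs \( H \) to be a \( p' \)-group, which holds by hypothesis. The rest is routine bookkeeping.
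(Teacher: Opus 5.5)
Your proposal is correct and follows the paper's proof essentially step by step. You decompose \( A \) into homocyclic components, combine Lemma~\ref{elabred} with the identity \( (\alpha^\phi)\circ\Lambda_i = (\alpha\circ\Lambda_i)^\phi \), and obtain \( P_k \) by iterating Proposition~\ref{elabcase}; your translation trick for applying Proposition~\ref{elabcase} to a coset \( Q\pi \) instead of a subgroup is a detail the paper leaves implicit, and you handle it correctly.
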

\begin{proof}
	We can decompose \( A \) into a direct sum of its homocyclic components \( A = A_1 \oplus \dots \oplus A_k \)
	in polynomial time. Let us use the notation of Proposition~\ref{autab} and Lemma~\ref{elabred}.
	By Proposition~\ref{autab}~(1), every automorphism \( \psi \) of \( A \) can be identified with a \( k \times k \) matrix
	of homomorphisms. We can compute entities of that matrix in polynomial time by considering cyclic generators of \( A_i \), \( i = 1, \dots, k \),
	and their images with respect to \( \psi \). In particular, the map \( \Lambda \) and hence maps \( \Lambda_i \), \( i = 1, \dots, k \),
	are computable in polynomial time.

	By Lemma~\ref{elabred}, for \( \phi \in \Aut(H) \) we have \( \alpha^\phi \sim \beta \) if and only if \( \alpha^\phi \circ \Lambda_i \sim \beta \circ \Lambda_i \)
	for all \( i = 1, \dots, k \). Note that \( \Aut(H) \) acts on representations of \( H \) on \( A_i/pA_i \),
	and by the definition of that action we have \( \alpha^\phi \circ \Lambda_i = (\alpha \circ \Lambda_i)^\phi \), \( i = 1, \dots, k \).
	Define \( P_0 = P \) and for \( 1 \leq i \leq k \) set
	\[ P_i = \{ \phi \in P_{i-1} \mid (\alpha \circ \Lambda_i)^\phi \sim \beta \circ \Lambda_i \}. \]
	Note that \( \alpha \circ \Lambda_i \) and \( \beta \circ \Lambda_i \) are representations of \( H \) on an elementary abelian group \( A_i/pA_i \),
	hence we can compute each \( P_i \) in polynomial time by Proposition~\ref{elabcase}. Since \( P_{\alpha \to \beta} = P_k \), we are done.
\end{proof}

\section{Intertwining automorphisms}\label{secInta}

Let \( H \) and \( A \) be finite groups given by their Cayley tables, and let \( A \) be an abelian \( p \)-group
for \( p \) not dividing \( |H| \). Assume we are given representations \( \alpha, \beta : H \to \Aut(A) \).
The task of computing the set
\[ \Aut(A, \alpha \sim \beta) = \{ \psi \in \Aut(A) \mid \alpha^\psi = \beta \} \]
was called the ``intertwining automorphisms problem'' in~\cite[Problem~3]{absyl}.
Essentially this is the problem of computing all isomorphisms between two \( H \)-modules, where
\( A \) is given one \( H \)-module structure via \( \alpha \), and another \( H \)-module structure via \( \beta \).

Note that \( \Aut(A, \alpha \sim \beta) \) is either empty or a coset of \( \Aut(A, \alpha \sim \alpha) \) in \( \Aut(A) \).
Since \( A \) is given by its Cayley table, we can view \( \Aut(A) \) as a permutation group on \( A \).

In~\cite[Subsection~5.2]{absyl} this problem is solved in the case when \( A \) is elementary abelian,
and in~\cite[Section~6]{absyl} it is noted without proof that the work of Ranum~\cite{ranum} allows one to
reduce the general case to the elementary abelian one.
Here we take a different approach and prove that the intertwining automorphisms problem can be solved in polynomial time
even when \( |H| \) and \( |A| \) are not coprime.
The key tools that we need are polynomial-time algorithms for the module isomorphism problem and for the computation of the unit group of a finite ring.

\begin{proposition}[{\cite[Theorem~1.1]{iuliana} and \cite[Proposition~3.1.7]{iuliana-thesis}}]\label{modiso}
	Let \( R \) be a finite unitary ring given by cyclic generators,
	and let \( A \) and \( A_0 \) be two finite \( R \)-modules given by cyclic generators.
	There exists a polynomial-time algorithm which decides whether \( A \) and \( A_0 \) are isomorphic as \( R \)-modules,
	and if they are, one can compute an explicit \( R \)-isomorphism \( \mu : A \to A_0 \) in polynomial time.
\end{proposition}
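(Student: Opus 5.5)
The plan is to reduce module isomorphism to a unit-finding problem inside the finite ring \( E = \End_R(A) \), and then to pass to the semisimple quotient of \( E \), where everything becomes linear algebra over finite fields.

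\textbf{Step 1 (decomposition).} First I decompose everything into primary parts. The additive group of \( R \) splits as \( R = \bigoplus_p R_p \) with \( R_p \) the \( p \)-primary part; each \( R_p \) is a two-sided ideal and a ring with its own identity. Correspondingly \( A = \bigoplus_p A_p \) and \( A_0 = \bigoplus_p (A_0)_p \) as \( R \)-modules. These decompositions are computable from cyclic generators, so it suffices to treat a ring \( R \) of prime-power characteristic \( p^e \).

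\textbf{Step 2 (homomorphisms and endomorphisms).} Next I compute \( \Hom_R(A, A_0) \), \( \Hom_R(A_0, A) \) and \( E \) as abelian groups given by cyclic generators. Each is the solution set of a system of linear equations over \( \mathbb{Z}/p^e\mathbb{Z} \): the homomorphism must commute with the images of the ring generators of \( R \). Such a system is solved in polynomial time via the Smith normal form over \( \mathbb{Z} \). If \( |A| \neq |A_0| \) I reject immediately. Otherwise an \( f \in \Hom_R(A,A_0) \) is an isomorphism as soon as \( fg \in E^\times \) for some \( g \in \Hom_R(A_0,A) \), since then \( f \) is injective and hence bijective.

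\textbf{Step 3 (passing to the semisimple quotient).} I then compute the Jacobson radical \( J(E) \). This can be done by linear algebra over \( \mathbb{F}_p \) after filtering by \( pE \), for instance as the preimage of the radical of the \( \mathbb{F}_p \)-algebra \( E/pE \); for that radical I would use the Rónyai-type polynomial-time radical algorithm over finite fields. Since \( x \in E^\times \) if and only if its image in \( E/J(E) \) is a unit, the question becomes the following: does the bilinear map
\[ \Hom_R(A,A_0) \times \Hom_R(A_0,A) \to E/J(E) \]
have a unit in its image, and if so, produce a preimage of one? Using the Wedderburn decomposition \( E/J(E) \cong \prod_i M_{n_i}(\mathbb{F}_{q_i}) \), computable deterministically in polynomial time over finite fields by Rónyai, the problem splits componentwise. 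In each component, \( \Hom_R(A_0,A) \) is a right module over the semisimple ring \( \bar E \), and I can decompose it into simple summands. Then \( A \cong A_0 \) holds exactly when, for every \( i \), the \( i \)-th isotypic component of \( \Hom_R(A_0,A)/\Hom_R(A_0,A)J(E) \) has the same multiplicity \( n_i \) as \( \bar E_i \) itself, that is, when this quotient is a free \( \bar E \)-module of rank one. The criterion is the usual Krull--Schmidt/projective-cover argument, applied to \( \Hom_R(A_0,-) \).

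\textbf{Step 4 (constructing the isomorphism).} Once freeness is established, I take a generator \( \bar g \) of the free module and lift it to \( g \in \Hom_R(A_0,A) \). I choose \( f \) so that the product lands in a unit. Concretely, \( \Hom_R(A,A_0) \) is dual to \( \Hom_R(A_0,A) \) under the pairing into \( \bar E \), so I pick \( f \) hitting the identity component by solving linear systems over each \( \mathbb{F}_{q_i} \). Finally I verify \( fg \in E^\times \) directly and output \( \mu = f \).

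\textbf{Main obstacle.} The hard part is Steps 3 and 4. I must find a unit in the image of a bilinear map deterministically rather than by random sampling, and this relies on computing the Wedderburn decomposition of \( E/J(E) \) deterministically over finite fields. There is also a subtle point: the pairing argument must genuinely yield an isomorphism rather than only a split surjection onto a summand.
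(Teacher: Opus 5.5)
\textbf{The main gap: deterministic Wedderburn decomposition.} Steps~3--4 rely on a deterministic polynomial-time Wedderburn decomposition, which you attribute to R\'onyai, and no such algorithm is known. R\'onyai's deterministic result covers the radical of \(E/pE\). Splitting a semisimple \(\mathbb{F}_p\)-algebra into simple components is polynomial-time equivalent to factoring polynomials over \(\mathbb{F}_p\), and identifying each component with an explicit \(M_{n_i}(\mathbb{F}_{q_i})\) is a fortiori at least as hard. This is known only in Las Vegas polynomial time, or deterministically in time polynomial in \(p\).

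Your own setting already contains the hard instances. Take \(R = A = A_0 = \mathbb{F}_p[x]/(f)\) with \(f\) squarefree. Then \(E \cong \mathbb{F}_p[x]/(f)\), and its Wedderburn decomposition is exactly the factorization of \(f\). In the cyclic-generator model the input length is only polynomial in \(\log|R|\) and \(\log|A|\), so \(p\) can be exponentially large. Hence decomposing \(\Hom_R(A_0,A)/\Hom_R(A_0,A)J(E)\) into simple summands, reading off isotypic multiplicities, and solving linear systems over each \(\mathbb{F}_{q_i}\) does not give a deterministic polynomial-time algorithm.

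The paper does not prove this proposition; it imports it from~\cite{iuliana}. Those algorithms avoid computing the Jacobson radical (let alone a Wedderburn decomposition) and do not distinguish small from large characteristic. That is exactly the ingredient your route lacks. For the paper's only use, Proposition~\ref{pbot}, \(A\) is given by a Cayley table, so \(p \le |A|\). There, Berlekamp-type deterministic factoring would make your route acceptable once the next issue is fixed.

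\textbf{A second problem: the criterion and the duality are false.} The criterion in Step~3 and the duality claim in Step~4 both fail. Let \(R\) be the ring of upper triangular \(2\times 2\) matrices over \(\mathbb{F}_q\), and let \(A=\mathbb{F}_q^2\) be the natural module of column vectors. Let \(A_0=S_1\oplus S_2\), where \(S_1,S_2\) are the one-dimensional modules on which a matrix acts through its \((1,1)\) and \((2,2)\) entry. The socle of \(A\) is \(\cong S_1\) and its top is \(\cong S_2\). Then:
\begin{itemize}
\item \(|A|=|A_0|\), \(E=\End_R(A)=\mathbb{F}_q\) and \(J(E)=0\);
\item \(\Hom_R(A_0,A)\cong\Hom_R(S_1,A)\cong\mathbb{F}_q\) is free of rank one over \(\overline{E}\);
\item yet \(A\) is indecomposable, so \(A\not\cong A_0\).
\end{itemize}
Moreover, every \(f\in\Hom_R(A,A_0)\) has image in \(S_2\), and every \(g\in\Hom_R(A_0,A)\) kills \(S_2\). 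So your pairing into \(\overline{E}\) is identically zero. Freeness is necessary, not sufficient.

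The fact you actually need is the following. If \(\varphi\colon A_0\to A\) is an isomorphism, then \(\Hom_R(A_0,A)=\varphi E\). So any lift \(g\) of a free generator of the quotient is \(\varphi\) followed by some \(u\) whose image in \(\overline{E}\) is a unit. Hence \(u\in E^\times\), and \(g\) is itself an isomorphism. The correct procedure is to lift a free generator, test it for bijectivity, and answer ``not isomorphic'' otherwise; no pairing is needed. Your final check \(fg\in E^\times\) saves soundness only if a failed check is read as non-isomorphism. Completeness then rests on this lifting argument, not on the (false) general duality.
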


\begin{proposition}[{\cite[Theorem~1.6]{skres}}]\label{units}
	Let \( R \) be a finite unitary ring given by its cyclic generators.
	If \( p_{\max} \) is the largest prime divisor of \( |R| \),
	then we can compute the multiplicative generators of \( R^\times \) in time polynomial in \( \log |R| \) and \( p_{\max} \).
\end{proposition}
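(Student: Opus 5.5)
The plan is to reduce, in stages, to computing generators of unit groups of matrix algebras over finite fields, and then to lift back through a nilpotent radical. The time bound in \( p_{\max} \) is expected because that is what polynomial factorisation over \( \mathbb{Z}/p\mathbb{Z} \) costs deterministically (Berlekamp's algorithm).

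\emph{Step 1: reduce to rings of prime-power order.} From the cyclic generators of \( R \), split the additive group into its primary components \( R = \bigoplus_p R_p \). Each \( R_p \) is a two-sided ideal and a unitary ring, with identity the \( p \)-component of \( 1 \). Then \( R^\times = \prod_p R_p^\times \), so it suffices to treat a single ring \( R \) with \( |R| = p^n \).

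\emph{Step 2: peel off the radical.} Compute the Jacobson radical \( J \) of \( R \). First compute the radical of the \( \mathbb{Z}/p\mathbb{Z} \)-algebra \( R/pR \) by linear algebra, using the standard trace-form methods of Rónyai for finite-dimensional algebras. Then take its preimage in \( R \), which is \( J \) since \( pR \subseteq J \). Because \( J \) is nilpotent, units lift modulo \( J \), which gives an exact sequence
\[ 1 \to 1+J \to R^\times \to (R/J)^\times \to 1. \]
Generators of \( R^\times \) are therefore generators of \( 1+J \) together with arbitrary lifts of generators of \( (R/J)^\times \).

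For \( 1+J \), use the filtration \( 1+J \geq 1+J^2 \geq \dots \geq 1+J^c = 1 \). Each section \( (1+J^i)/(1+J^{i+1}) \) is isomorphic to the additive group \( J^i/J^{i+1} \) via \( 1+x \mapsto x \). So the elements \( 1+x \), with \( x \) running over additive generators of all the \( J^i \), generate \( 1+J \). There are only \( O(\log|R|) \) of them per layer and \( c \le n \) layers.

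\emph{Step 3: the semisimple quotient.} \( R/J \) is a semisimple \( \mathbb{Z}/p\mathbb{Z} \)-algebra. Compute its centre and split it into simple components via primitive central idempotents; this needs factoring polynomials over \( \mathbb{Z}/p\mathbb{Z} \), so costs time polynomial in \( p \) and \( \log|R| \). Then compute explicit isomorphisms of each component with \( M_m(\mathbb{F}_q) \), again by Rónyai-type algorithms (finding a primitive idempotent, i.e.\ a zero divisor, in a full matrix algebra). Finally, \( \GL_m(\mathbb{F}_q) \) is generated by the elementary transvections \( 1 + \lambda e_{ij} \), with \( \lambda \) running over an \( \mathbb{F}_p \)-basis of \( \mathbb{F}_q \), together with the matrices \( \mathrm{diag}(\mu,1,\dots,1) \), with \( \mu \) running over a generating set of \( \mathbb{F}_q^\times \).

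\emph{Main obstacle.} The hard step is producing a generating set of \( \mathbb{F}_q^\times \) deterministically, since a primitive root cannot be found efficiently. We do not need a single generator, though; a polynomial-size generating set suffices.

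My first attempt would use \( \mathbb{F}_q = \mathbb{F}_p[\theta] \) of degree \( d \) and take all elements \( a_0 + a_1\theta + \dots + a_k\theta^k \) of small degree \( k \), with coefficients in \( \mathbb{F}_p \). For \( k = 1 \) this set has about \( p^2 \) elements and is polynomial in \( p \). Character-sum bounds of Katz/Chung type show that \( \{\theta + a : a \in \mathbb{F}_p\} \) already generates \( \mathbb{F}_q^\times \) when \( p \) is large compared with \( d \). When \( p \) is small relative to \( d \), one instead takes low-degree elements with \( k \approx \log_p d \); the number of such elements stays polynomial in \( p \) and \( d \), and Shoup-type results guarantee they generate. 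If this bound fails, the fallback is to generate \( \mathbb{F}_q^\times \) modulo each prime \( r \mid q-1 \) separately. An element outside \( (\mathbb{F}_q^\times)^r \) for each such \( r \) can be certified by an exponentiation test, but this requires knowing the prime divisors of \( q-1 \), which is the likely sticking point.
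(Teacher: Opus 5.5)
The paper gives no proof of this proposition; it imports it from \cite{skres}. So I am judging your outline on its own, not against an in-paper argument.

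Your reduction steps are sound. These are: the primary splitting (the orders of the cyclic generators can be factored by trial division up to $p_{\max}$); $pR\subseteq J$ with $J/pR=J(R/pR)$; the fact that any preimage of a unit of $R/J$ is a unit; generating $1+J$ layer by layer; the Wedderburn splitting and explicit matrix units via Berlekamp and R\'onyai; and generating $\GL_m(\mathbb{F}_q)$ by transvections and $\mathrm{diag}(\mu,1,\dots,1)$. You are also right that everything hinges on a deterministic generating set of $\mathbb{F}_q^\times$, $q=p^d$, of size polynomial in $p$ and $d$. This is unavoidable, since for $R=\mathbb{F}_q$ the proposition asserts exactly that.

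That step is where your proposal is not yet a proof. For $p\le(d-1)^2$ you only assert that ``Shoup-type results'' make low-degree elements generate. Your fallback should be dropped: it needs the prime divisors of $p^d-1$, which are not available in time polynomial in $p$ and $d$ (the $p_{\max}$ allowance lets you factor $|R|$, not $q-1$).

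The low-degree claim itself is true, by the same Weil bound behind Katz--Chung.
\begin{itemize}
\item Let $f$ be the minimal polynomial of $\theta$, and let $\chi$ be a nontrivial character of $\mathbb{F}_q^\times$, viewed as a Dirichlet character modulo $f$. Its $L$-function is a polynomial of degree at most $d-1$ with inverse roots of absolute value at most $\sqrt p$.
\item Hence $\bigl|\sum_{\deg h=k}\Lambda(h)\chi(h(\theta))\bigr|\le(d-1)p^{k/2}$, summing over monic $h$, where $\Lambda(P^j)=\deg P$ for irreducible $P$.
\item Suppose every $P(\theta)$, with $P$ monic irreducible of degree $k<d$, lay in a proper subgroup, and take $\chi$ trivial on it. Write $N_k$ for the number of such $P$. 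The irreducible terms contribute $kN_k\ge p^k-2p^{k/2}$, and the proper prime powers contribute at most $2p^{k/2}$ in absolute value. This forces $p^{k/2}\le d+3$.
\item So for the least $k$ with $p^{k/2}>d+3$, the at most $p(d+3)^2$ elements $h(\theta)$, $h$ monic of degree $k$, generate $\mathbb{F}_q^\times$. If this $k$ is at least $d$, then $q\le p(d+3)^2$ and you can simply list $\mathbb{F}_q^\times$.
\end{itemize}
The case $k=1$ is essentially Katz--Chung. An equivalent route is to apply Katz--Chung over $\mathbb{F}_{p^r}$ with $p^r>(d-1)^2$ inside the compositum $\mathbb{F}_{p^r}(\theta)$, then push down with the surjective norm.

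You should also say where a $\theta$ with $\mathbb{F}_p(\theta)=\mathbb{F}_q$ comes from. For example: take an irreducible polynomial of degree $d$ from Shoup's deterministic algorithm, and find a root of it in the computed centre using Berlekamp over $\mathbb{F}_q$.

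With these repairs your outline gives the stated bound.
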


Now we are ready to solve the intertwining automorphisms problem. We note that a very similar argument was
used in the proof of~\cite[Proposition~5.1]{skres}.

\begin{proposition}\label{pbot}
	Let \( H \) and \( A \) be groups given by their Cayley tables, and assume that \( A \) is abelian.
	If \( \alpha, \beta : H \to \Aut(A) \) are representations of \( H \) on \( A \), then one can compute
	\( \Aut(A,\alpha \sim \beta) = \{ \psi \in \Aut(A) \mid \alpha^\psi = \beta \} \) in polynomial time.
\end{proposition}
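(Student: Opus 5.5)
The plan is to turn the problem into a module isomorphism problem over a finite ring, and then use Propositions~\ref{modiso} and~\ref{units}.

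\textbf{The ring.} Let \( R \) be the unitary subring of \( \End(A) \) generated by \( \alpha(H) \); equivalently, \( R \) is the image of the group ring \( \mathbb{Z}H \) under \( \alpha \). We compute its additive span by closing \( \alpha(H) \cup \{1\} \) under addition inside \( \End(A) \). Since \( |\End(A)| \leq |A|^{\log |A|} \) this is not immediate, but we can iteratively add elements and take the generated additive subgroup. Each step strictly enlarges the subgroup, and any chain of subgroups has length at most \( \log|\End(A)| \leq (\log|A|)^2 \). Every element of \( \End(A) \) is stored as a table of length \( |A| \), so all steps are polynomial. Having the additive group, we find cyclic generators \( g_1,\dots,g_k \) and compute the structure constants \( \alpha_{ijs} \) by expressing the products \( g_ig_j \) in these generators, which is linear algebra over abelian groups.

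\textbf{The modules.} We make \( A \) into an \( R \)-module in two ways. The first module \( A_\alpha \) is \( A \) with the natural action of \( R \subseteq \End(A) \). The second module \( A_\beta \) should let \( \alpha(h) \) act as \( \beta(h) \). For this we need the map \( \sum c_h\alpha(h) \mapsto \sum c_h\beta(h) \) to be well defined. We test this on the generators: each \( g_i \) is written as \( \sum c_h \alpha(h) \) with the coefficients recorded during the closure. We then verify that the assignment on generators extends to a ring homomorphism \( R \to \End(A) \), by checking additive orders and the structure constants. If the check fails, then \( \Aut(A,\alpha\sim\beta) \) is empty. Indeed, if \( \psi \) conjugates \( \alpha \) to \( \beta \), then conjugation by \( \psi \) is a ring isomorphism sending \( \sum c_h\alpha(h) \) to \( \sum c_h\beta(h) \), so the map is well defined.

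\textbf{The isomorphisms.} An automorphism \( \psi \) satisfies \( \alpha^\psi=\beta \) exactly when \( \psi \) is an \( R \)-isomorphism \( A_\alpha\to A_\beta \) in the row-action convention. Proposition~\ref{modiso} decides whether such an isomorphism exists and returns one, \( \mu \), in polynomial time. The coset is then \( \Aut(A,\alpha\sim\alpha)\,\mu \), and \( \Aut(A,\alpha\sim\alpha) \) is the unit group of the centralizer ring \( C=\End_R(A_\alpha) \). We compute \( C \) as the set of endomorphisms commuting with each \( \alpha(h) \). This is a system of linear equations over \( \mathbb{Z}/e\mathbb{Z} \) in the matrix entries of Proposition~\ref{autab}(1), or equivalently the kernel of a homomorphism of abelian groups given by cyclic generators. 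Solving it yields cyclic generators of \( C \) in polynomial time. Proposition~\ref{units} then gives multiplicative generators of \( C^\times \) in time polynomial in \( \log|C| \) and \( p_{\max}\leq |A| \).

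\textbf{Main obstacle.} I expect the main difficulty to be bookkeeping rather than mathematics. It consists of keeping the input sizes polynomial when we represent \( R \) and \( C \) by cyclic generators, and checking that the well-definedness of the \( \beta \)-action can be tested efficiently. The polynomial bound on \( p_{\max} \) in Proposition~\ref{units} holds because \( |C| \) divides a power of \( |A| \).
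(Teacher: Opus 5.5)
Your overall strategy is the paper's: an isomorphism from Proposition~\ref{modiso}, followed by the unit group of the centralizer ring via Proposition~\ref{units}. However, building \(A_\beta\) over the image ring \(R\le\End(A)\) spanned by \(\alpha(H)\) leaves a real hole. Checking additive orders and structure constants only shows that the assignment \(g_i\mapsto\sum_h c^{(i)}_h\beta(h)\) extends to a ring homomorphism \(\rho\colon R\to\End(A)\). It does not show \(\rho(\alpha(h))=\beta(h)\) for \(h\in H\), because the relations expressing each \(\alpha(h)\) in terms of the \(g_i\) are never tested.

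Here is a concrete failure. Take \(H=\{1,t\}\) of order \(2\), \(A=\mathbb{Z}/3\mathbb{Z}\), \(\alpha\) trivial and \(\beta(t)=-1\). Then \(R=(\mathbb{Z}/3\mathbb{Z})\cdot 1\), with the natural recorded generator \(g_1=1=\alpha(1)\). The assignment \(g_1\mapsto\beta(1)=1\) passes both of your checks, so \(A_\beta\) becomes the trivial module. Proposition~\ref{modiso} then returns some \(\mu\), and you output all of \(\Aut(A)\), although \(\Aut(A,\alpha\sim\beta)=\emptyset\) since every \(\alpha^\psi\) is trivial. So your claim that \(R\)-isomorphisms \(A_\alpha\to A_\beta\) are exactly the \(\psi\) with \(\alpha^\psi=\beta\) breaks down whenever the test passes for the wrong reason. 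The test's outcome even depends on which representative you happened to record for \(g_1\).

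There are two easy repairs. The first: after the order check, test \(\rho(\alpha(h))=\beta(h)\) for every \(h\in H\). This is equivalent to well-definedness of \(\sum c_h\alpha(h)\mapsto\sum c_h\beta(h)\). Multiplicativity then comes for free from \(\alpha(h)\alpha(k)=\alpha(hk)\), so the structure-constant check becomes redundant.

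The second repair is simpler and is what the paper does: take \(R=(\mathbb{Z}/e\mathbb{Z})[H]\) with \(e\) the exponent of \(A\). Both \(\alpha\) and \(\beta\) extend to this ring because \(e\) annihilates \(\End(A)\). Its cyclic generators are the elements of \(H\), its structure constants come straight from the Cayley table, and \(\log|R|=|H|\log e\) is polynomial in the input length. This removes both the closure computation and any well-definedness test, so your worry about keeping \(R\) small was unnecessary. Your treatment of the centralizer ring and its units coincides with the paper's.
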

\begin{proof}
	Let \( e \) be the exponent of \( A \). Let \( R = (\mathbb{Z}/e\mathbb{Z})[H] \) be the group ring of \( H \) with
	coefficients in \( \mathbb{Z}/e\mathbb{Z} \). Note that \( R \) is a unitary ring and we can compute its cyclic generators
	in polynomial time. Representations \( \alpha \) and \( \beta \) define two \( R \)-module structures on \( A \);
	call them \( A_\alpha \) and \( A_\beta \).
	By Proposition~\ref{modiso}, we can decide in polynomial time whether \( R \)-modules \( A_\alpha \) and \( A_\beta \) are isomorphic and find an
	explicit isomorphism if they are. If \( A_\alpha \) and \( A_\beta \) are not isomorphic, then \( \Aut(A, \alpha \sim \beta) \)
	is empty and we are done. If they are isomorphic, then we can compute some isomorphism \( \mu : A \to A \), which satisfies
	\( \mu(a^{\alpha(h)}) = \mu(a)^{\beta(h)} \) for all \( a \in A \), \( h \in H \).
	This property can be rewritten as \( \mu(\mu^{-1}(a)^{\alpha(h)}) = a^{\beta(h)} \) for all \( a \in A \), \( h \in H \),
	which is equivalent to \( \alpha^\mu = \beta \). So \( \mu \in \Aut(A, \alpha \sim \beta) \) and we have
	\( \Aut(A, \alpha \sim \beta) = \Aut(A, \alpha \sim \alpha) \cdot \mu \). It suffices to compute \( \Aut(A, \alpha \sim \alpha) \).

	Note that cyclic generators of the ring \( \End(A) \) can be computed in polynomial time.
	We define a subring \( K \leq \End(A) \) by
	\[ K = \{ \psi \in \End(A) \mid \alpha(h) \cdot \psi = \psi \cdot \alpha(h) \text{ for all } h \in H \}. \]
	Since \( K \) is defined by polynomially many Diophantine equations inside of \( \End(A) \), its cyclic generators
	can be computed in polynomial time, see e.g.~\cite{dio1, dio2}.

	Let \( A_i \) be the Sylow \( p_i \)-subgroup of \( A \), so \( A = A_1 \oplus \dots \oplus A_k \).
	Then \( \End(A) = \End(A_1) \oplus \dots \oplus \End(A_k) \), and since \( \End(A_i) \) is a \( p_i \)-group
	by Proposition~\ref{autab}, the largest prime divisor of \( |\End(A)| \) is \( \max \{ p_1, \dots, p_k \} \).
	Hence the largest prime divisor of \( |K| \) is bounded by \( |A| \), and by Proposition~\ref{units}, we can compute the unit group
	\[ K^\times = \{ \psi \in \Aut(A) \mid \alpha(h) \cdot \psi = \psi \cdot \alpha(h) \text{ for all } h \in H \} \]
	in polynomial time. It is clear that \( \Aut(A, \alpha \sim \alpha) = K^\times \), so we are done.
\end{proof}

\section{Proof of Theorem~\ref{main}}\label{secMain}

Let \( A \) be a characteristic subgroup of \( G \). We will say that \( A \) is \emph{characteristically complemented}
by \( H \leq G \), if \( A \cap H = 1 \) and \( AH = G \), and moreover, for any \( \phi \in \Aut(G) \) the subgroup \( H^\phi \)
is conjugate to \( H \) in \( G \). Clearly \( H^\phi \) is also a complement to \( A \) with required properties,
so \( H \) is defined up to automorphisms of \( G \).

The following lemma shows how to reduce the problem of computing \( \Aut(G) \) to the problem of computing \( \Aut(H) \).

\begin{lemma}\label{lred}
	Let \( G \) be an A-group given by its Cayley table, and assume we are given a characteristic
	abelian \( p \)-subgroup \( A \) of~\( G \). Suppose that \( A \) is characteristically complemented by \( H \) in \( G \),
	and we are given \( H \) and permutation generators of \( \Aut(H) \).
	Then we can compute permutation generators of \( \Aut(G) \) in polynomial time.
\end{lemma}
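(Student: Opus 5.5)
We describe $\Aut(G)$ through pairs of automorphisms of $A$ and $H$ that are compatible with the conjugation action, correct by inner automorphisms, and use Propositions~\ref{ptop} and~\ref{pbot}. Let $\gamma : H \to \Aut(A)$ be the representation given by conjugation, so $G = A \rtimes_\gamma H$. The elements of $H$ of order divisible by $p$ are handled as follows. Every $p$-element of $H$ lies in a Sylow $p$-subgroup of $G$ containing $A$ ($A$ is a normal $p$-subgroup), and that Sylow subgroup is abelian. Hence all $p$-elements of $H$ centralize $A$. Let $N$ be the subgroup of $H$ generated by its $p$-elements. Then $N \leq \ker\gamma$, $N$ is characteristic in $H$, and $\bar H = H/N$ is a $p'$-group: a Sylow $p$-subgroup of $H$ lies in $N$, so $p \nmid |H:N|$. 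Thus $\gamma$ factors through a representation $\bar\gamma : \bar H \to \Aut(A)$, and $\Aut(H)$ induces a group $\bar P \le \Aut(\bar H)$. We can compute generators of $\bar P$ in polynomial time as images of the given generators.

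Next we describe the stabilizer $\Aut(G)_H = \{\phi \in \Aut(G) \mid H^\phi = H\}$. Such a $\phi$ restricts to a pair $(\psi,\theta) \in \Aut(A)\times\Aut(H)$ with $(a^h)^\phi = (a^\psi)^{h^\theta}$, that is, $\gamma^\psi = \gamma^{\theta}$ in the paper's notation. Conversely, every pair satisfying this compatibility defines an automorphism $ah \mapsto a^\psi h^\theta$ of $A\rtimes H$. So $\Aut(G)_H$ is the set of pairs $(\psi,\theta)$ with $\bar\gamma^{\bar\theta} \sim \bar\gamma$ witnessed by $\psi$.

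We compute this set in two steps. First, Proposition~\ref{ptop}, applied to $\bar H$, $A$, $\bar P$ and $\alpha=\beta=\bar\gamma$, gives $\bar P_{\bar\gamma\to\bar\gamma}$. Take its full preimage $Q \le \Aut(H)$ under the homomorphism $\Aut(H)\to\bar P$, computed by standard permutation group algorithms. Second, for each generator $\theta$ of $Q$, apply Proposition~\ref{pbot} to $\gamma^\theta$ and $\gamma$ to find some $\psi_\theta \in \Aut(A)$ with $\gamma^{\psi_\theta} = \gamma^\theta$ (the direction of the conjugation must be matched to the convention). The kernel $\Aut(A,\gamma\sim\gamma) = C_{\Aut(A)}(\gamma(H))$ is also given by Proposition~\ref{pbot}. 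Then $\Aut(G)_H$ is generated by the lifts of the pairs $(\psi_\theta,\theta)$ together with the pairs $(\psi,1)$ for $\psi$ in that centralizer. Each generator is written down as a permutation of $G$ in polynomial time. This is valid because the set of $\theta$ admitting a compatible $\psi$ is exactly $Q$, and the fibre over each $\theta$ is a coset of the centralizer.

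Finally, $H$ is characteristically complemented, so for every $\phi\in\Aut(G)$ there is $g\in G$ with $H^{\phi}=H^{g}$. Hence $\Aut(G) = \Aut(G)_H\cdot\mathrm{Inn}(G)$, and we add the $|G|$ inner automorphisms as further generators. The main obstacle is the verification in the second step: that the pairs from the generators of $Q$ plus the centralizer really generate all compatible pairs, and that the action conventions align so that Proposition~\ref{ptop} yields exactly $\{\theta : \gamma^\theta\sim\gamma\}$. Since $\sim$ is an equivalence and $\gamma^\theta$ depends only on $\bar\theta$, this follows from the coset structure.
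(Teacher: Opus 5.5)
Your proposal is correct and follows essentially the same route as the paper: reduce to the stabilizer of \(H\) modulo inner automorphisms, then pass to the \(p'\)-quotient \(H/N\) to compute the projection \(Q = S_H\) via Proposition~\ref{ptop}. After that, you lift the generators of \(Q\) together with the centralizer \(\Aut(A,\gamma\sim\gamma)\) using Proposition~\ref{pbot}, exactly as the paper does. The only slip is the convention: with the paper's actions the compatibility condition reads \(\gamma^{\psi} = \gamma^{\theta^{-1}}\) rather than \(\gamma^{\psi} = \gamma^{\theta}\). As you anticipated, this leaves \(Q\) unchanged because it is a group, and it only means you should feed the pair \(\gamma, \gamma^{\theta^{-1}}\) to Proposition~\ref{pbot}.
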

\begin{proof}
	Let \( \phi \in \Aut(G) \) be arbitrary. Since \( A \) is characteristically complemented by \( H \), there exists
	\( x \in G \) such that \( H^\phi = H^{x^{-1}} \). In particular, for an inner automorphism \( \iota_x(g) = x^{-1}gx \), \( g \in G \),
	the composition \( \phi \circ \iota_x \) stabilizes \( H \). Define \( S = \{ \phi \in \Aut(G) \mid H^\phi = H \} \)
	and observe that
	\[ \Aut(G) = \bigcup_{x \in G} S \cdot \iota_x = \langle S \cup \{ \iota_x \mid x \in G \} \rangle, \]
	so it suffices to compute the generators of \( S \).

	An element \( \phi \in S \) is completely determined by restrictions \( \phi|_A \in \Aut(A) \)
	and \( \phi|_H \in \Aut(H) \), so there is an embedding of \( S \) into \( \Aut(A) \times \Aut(H) \).
	By identifying \( S \) with its image under this embedding, we may assume that \( S \) is a subgroup of \( \Aut(A) \times \Aut(H) \).
	We will obtain generators of \( S \) by studying its projections \( S_A \) and \( S_H \) to \( \Aut(A) \) and \( \Aut(H) \), respectively.

	Define a representation \( \alpha : H \to \Aut(A) \) of \( H \) on \( A \) by
	\( a^{\alpha(h)} = a^h \), where \( a \in A \), \( h \in H \).
	Let \( h \in H \) be an arbitrary \( p \)-element. Since \( A \) is normal in \( G \),
	the group \( \langle A, h \rangle \) is a \( p \)-subgroup of \( G \). It is abelian, as \( G \) is an A-group,
	so \( h \) acts trivially on \( A \). Let \( K \) denote the subgroup of \( H \) generated by all \( p \)-elements.
	It follows that \( K \) lies in the kernel of \( \alpha \), and we have a representation of \( \overline{H} = H/K \)
	on \( A \) defined by \( \overline{\alpha}(hK) = \alpha(h) \), \( h \in H \).
	Note that \( K \), \( \overline{H} \), \( \alpha \) and \( \overline{\alpha} \) are computable in polynomial time.

	Since \( K \) is a characteristic subgroup of \( H \), any \( \eta \in \Aut(H) \) induces an automorphism
	\( \overline{\eta} \in \Aut(\overline{H}) \). The map sending \( \eta \in \Aut(H) \) to \( \overline{\eta} \in \Aut(\overline{H}) \)
	is a homomorphism from a permutation group on \( H \) to a permutation group on \( \overline{H} \), and
	we can compute its kernel \( C \) in polynomial time. Let \( P \) denote the image of this homomorphism, i.e.
	\[ P = \{ \overline{\eta} \in \Aut(\overline{H}) \mid \eta \in \Aut(H) \}, \]
	and observe that its generators can be computed in polynomial time as images of generators of \( \Aut(H) \).

	For any \( \eta \in \Aut(H) \), the subgroup \( K \) lies in the kernel of \( \alpha^\eta \),
	so \( \overline{\alpha^\eta} \) can be naturally defined. Clearly \( \overline{\alpha}^{\overline{\eta}} = \overline{\alpha^\eta} \),
	and \( \alpha \sim \alpha^\eta \) if and only if \( \overline{\alpha} \sim \overline{\alpha}^{\overline{\eta}} \).

	Let \( \nu \in \Aut(A) \), \( \eta \in \Aut(H) \), and note that the map \( \phi : G \to G \) defined by
	\( \phi(ah) = \nu(a)\eta(h) \), \( a \in A \), \( h \in H \), lies in \( S \) if and only if
	for all \( a \in A \), \( h \in H \) we have \( \phi(a^h) = \phi(a)^{\phi(h)} \).
	By the construction of \( \phi \) and definition of \( \alpha \) this is equivalent to
	\( \nu(a^{\alpha(h)}) = \nu(a)^{\alpha(\eta(h))} \), \( a \in A \), \( h \in H \).
	After applying \( \nu^{-1} \) to both sides we yield \( \alpha(h) = \nu \cdot \alpha(\eta(h)) \cdot \nu^{-1} \), \( h \in H \).
	It follows that \( \eta \) lies in the projection of \( S \) to \( \Aut(H) \) if and only if \( \alpha \sim \alpha^{\eta^{-1}} \), in other words
	\[ S_H = \{ \eta \in \Aut(H) \mid \alpha \sim \alpha^{\eta^{-1}} \}. \]
	By the previous paragraph, \( \alpha \sim \alpha^{\eta^{-1}} \) is equivalent to \( \overline{\alpha} \sim \overline{\alpha}^{\overline{\eta}^{-1}} \), so
	\[ S_H = \{ \eta \in \Aut(H) \mid \overline{\alpha} \sim \overline{\alpha}^{\overline{\eta}^{-1}} \}. \]

	Note that \( K \) contains all Sylow \( p \)-subgroups of \( H \), so \( |\overline{H}| \) is not divisible by~\( p \).
	We can now apply Proposition~\ref{ptop} to \( A \), \( \overline{H} \) and \( \overline{\alpha} \)
	and compute
	\[ P_{\overline{\alpha} \to \overline{\alpha}} = \{ \overline{\eta} \in P \mid
	\overline{\alpha} \sim \overline{\alpha}^{\overline{\eta}^{-1}} \} \]
	in polynomial time.
	The group \( S_H \) is the full preimage of \( P_{\overline{\alpha} \to \overline{\alpha}} \)
	under the map \( \overline{\phantom{a}} : \Aut(H) \to \Aut(\overline{H}) \). Therefore we can compute the generators of \( S_H \)
	in polynomial time by computing some preimages of generators of \( P_{\overline{\alpha} \to \overline{\alpha}} \) and adding the generators of~\( C \)
	which were computed earlier. Let \( \eta_1, \dots, \eta_t \) denote the generators of \( S_H \).

	For \( \eta \in S_H \) define \( S_A(\eta) = \{ \nu \in \Aut(A) \mid (\nu, \eta) \in S \} \).
	Note that \( S_A(\eta) \) is a coset of \( S_A(1_H) \), where \( 1_H \) denotes the identity automorphism of \( H \).
	For every \( \eta \in S_H \) choose some \( \nu_\eta \in \Aut(A) \) such that \( (\nu_\eta, \eta) \in S \). Then
	\[ S = \bigcup_{\eta \in S_H} S_A(\eta) \cdot (1_A, \eta) = \bigcup_{\eta \in S_H} S_A(1_H) \cdot (\nu_\eta, \eta) =
	\langle S_A(1_H) \cup \{ (\nu_\eta, \eta) \mid \eta \in S_H \} \rangle. \]
	We claim that \( S = \langle S_A(1_H) \cup \{ (\nu_{\eta_i}, \eta_i) \mid i = 1, \dots, t \} \rangle \).
	It is clear that the right hand side lies in \( S \), so it suffices to show that \( \langle (\nu_{\eta_i}, \eta_i) \mid i = 1, \dots, t \rangle \)
	contains a complete set of representatives for cosets of \( S_A(1_H) \) in \( S \).
	Given \( \eta \in S_H \), we can express it as \( \eta = \eta_{i_1} \cdots \eta_{i_k} \) for some \( i_1, \dots, i_k \in \{ 1, \dots, t \} \).
	Then \( (\nu_{\eta_{i_1}}, \eta_{i_1}) \cdots (\nu_{\eta_{i_k}}, \eta_{i_k}) = (\nu', \eta) \in S \) and hence
	\( S_A(\eta) \cdot (1_A, \eta) = S_A(1_H) \cdot (\nu', \eta) \), as required.

	Now the generators of \( S_A(1_H) = \Aut(A, \alpha \sim \alpha) \) can be computed in polynomial time by Proposition~\ref{pbot}.
	For every \( \eta_i \), \( i = 1, \dots, t \), we can find \( \nu_{\eta_i} \in \Aut(A) \) such that \( (\nu_{\eta_i}, \eta_i) \in S \)
	in polynomial time by Proposition~\ref{pbot} applied to \( \alpha \) and~\( \alpha^{\eta_i^{-1}} \).
	Hence we can compute the generators of \( S \) and thus of \( \Aut(G) \) in polynomial time.
\end{proof}

Although we will apply the following lemma to solvable A-groups only, we formulate it in the general case
in view of potential future applications.

\begin{lemma}\label{sgen}
	Let \( G \) be an A-group with a nonabelian solvable radical.
	Then there exists a nontrivial characteristic abelian \( p \)-subgroup \( A \)
	such that it is characteristically complemented by some \( H \leq G \).
	If \( G \) is given by its Cayley table, then one can also compute \( A \) and \( H \) in polynomial time.
\end{lemma}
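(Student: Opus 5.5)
Let \( R \) denote the solvable radical of \( G \); it is a nonabelian solvable A-group. The intro points to Broshi's results, but I will try to construct \( A \) and \( H \) directly and prove the conjugacy of complements via Gaschütz/Schur--Zassenhaus type arguments.

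\textbf{Choice of \( A \).} In a solvable A-group the Fitting subgroup is nilpotent, hence abelian (all nilpotent subgroups of A-groups are abelian). Let \( F = F(R) \); it is abelian and characteristic in \( G \). A well-known fact about solvable A-groups is that \( F \cap Z(R) \ldots \) — but more usefully, Taunt's theorem: in a solvable A-group \( R \), \( R' \cap Z(R) = 1 \), and the derived length/nilpotent length are bounded. Since \( R \) is nonabelian, \( R' \ne 1 \), and I would take \( A \) to be a Sylow \( p \)-subgroup of the abelian group \( F(R) \cap R' \) for a prime \( p \) such that this is nontrivial (it is nontrivial since \( R' \) is solvable and nontrivial, so \( F(R') \ne 1 \), and \( F(R') \le F(R) \)). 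Then \( A \) is characteristic in \( G \) and abelian. Using \( R' \cap Z(R) = 1 \) and coprime action, \( A = [A, R] \) (Fitting decomposition \( A = C_A(Q) \times [A,Q] \) for a \( p' \)-Hall subgroup \( Q \) of \( R \), with \( C_A(R) \) trivial part discarded), so \( A \) has no central component.

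\textbf{Existence of a complement.} By Gaschütz's theorem, since \( A \) is abelian normal and contained in an abelian Sylow \( p \)-subgroup \( S \) of \( G \), \( A \) has a complement in \( G \) iff it has one in \( S \). Since \( S \) is abelian and \( A = [A, Q] \) is a direct factor of \( S \) (decompose \( S = C_S(Q)\times[S,Q] \) and check \( [S,Q] = A \) or at least that \( A \) splits in \( S \)), a complement exists. The delicate point is ensuring \( A \) is a direct summand of \( S \); I would arrange this by choosing \( A = [O_p(R), Q] \) for a \( p' \)-Hall subgroup \( Q \) of \( R \), which is independent of the choice of \( Q \) (all conjugate, and \( O_p(R) \) abelian normal), hence characteristic, and is nontrivial for a suitable \( p \) since otherwise \( R \) would be nilpotent-by-abelian in a way forcing \( R \) abelian. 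Then \( S = C_S(Q)\times[S,Q] \) and \( [S,Q] = A \) because \( [S,Q] \le [S,R] \cap \ldots \le O_p(R) \) up to the Frattini/coprime argument.

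\textbf{Conjugacy of complements.} This is the main obstacle. Complements to \( A \) are classified by \( H^1(G/A, A) \), and I would show this vanishes: restriction to a Sylow \( p \)-subgroup \( \bar S \) of \( G/A \) is injective, and \( \bar S \cong C_S(Q) \) centralizes \( A \)? — rather, \( A = [A,Q] \) with \( C_A(Q)=1 \), and an inflation--restriction argument through the normal \( p' \)-part acting fixed-point-freely gives \( H^1 = 0 \). Then any two complements are \( A \)-conjugate, in particular \( H^\phi \) is conjugate to \( H \). \textbf{Algorithm:} compute \( R \), \( O_p(R) \), a Hall subgroup \( Q \), \( A = [O_p(R),Q] \) for each prime and pick a nontrivial one; find a complement by brute search is too slow, so instead construct \( H \) as \( N_G(\ldots) \) or solve the linear system for complements (cocycle equations over \( A \)), which is polynomial in \( |G| \).
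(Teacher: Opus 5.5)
\textbf{There is a genuine gap: your choice of \(A\) need not be complemented, and even when it is, it need not be characteristically complemented.}

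\emph{Existence of a complement.} You cannot decompose \(S=C_S(Q)\times[S,Q]\), because a Hall \(p'\)-subgroup \(Q\) of \(R\) need not normalize a Sylow \(p\)-subgroup \(S\) of \(G\). In fact \(A=[O_p(R),Q]\) need not be a direct factor of \(S\), and then by Gasch\"utz it has no complement at all.

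Concretely, let \(r\ge 5\) be prime and \(D=\langle s,t\mid s^9=t^2=1,\ s^t=s^{-1}\rangle\cong D_{18}\). Let \(D\) act on \(V=\mathbb{F}_r^2\) through \(D/\langle s^3\rangle\cong\Sym(3)\) via the irreducible \(2\)-dimensional module, and set \(G=V\rtimes D\).
\begin{enumerate}
\item \(G\) is a solvable A-group, with Sylow subgroups \(C_2\), \(C_9\), \(C_r^2\).
\item Every normal \(3\)-subgroup centralizes \(V\), and \(C_G(V)=V\times\langle s^3\rangle\). Hence \(O_3(G)=\langle s^3\rangle\).
\item Since \(t\) inverts \(s^3\), you get \([O_3(G),V\langle t\rangle]=\langle s^3\rangle\neq 1\).
\item If \(H\) complemented \(A=\langle s^3\rangle\), a subgroup \(T\le H\) of order \(3\) would make \(TA\) a Sylow \(3\)-subgroup, hence \(\cong C_9\), forcing \(T=A\). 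So no complement exists.
\end{enumerate}
Thus picking ``a nontrivial one'' among the primes can fail. Your first candidate, the Sylow \(3\)-subgroup of \(F(R)\cap R'=V\times\langle s^3\rangle\), is the same \(\langle s^3\rangle\).

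\emph{Conjugacy of complements.} Your \(H^1\) argument also fails, because in general there is no normal \(p'\)-subgroup of \(G/A\) acting fixed-point-freely on \(A\).

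Take \(D=(\langle a\rangle\times\langle s\rangle)\rtimes\langle t\rangle\) with \(a,s\) of order \(3\), both inverted by \(t\). Let \(D\) act on \(V\) as above through \(D/\langle a\rangle\cong\Sym(3)\), and set \(G=V\rtimes D\).
\begin{enumerate}
\item Here \(A=[O_3(G),G]=\langle a\rangle\), and it is complemented.
\item However \(G/A\cong V\rtimes\Sym(3)\) and \(H^1(G/A,A)\cong H^1(\Sym(3),\mathbb{F}_3^{\mathrm{sgn}})\cong\mathbb{F}_3\neq 0\).
\item The complements \(V\langle a^is,t\rangle\), \(i=0,1,2\), are pairwise non-conjugate. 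Indeed, \(N_G(\langle a,s\rangle)=D\) normalizes each \(\langle a^is\rangle\), so Burnside's fusion theorem applies.
\item The automorphism fixing \(V\), \(a\), \(t\) and sending \(s\mapsto as\) permutes these complements cyclically. Hence \(A\) is not characteristically complemented by any subgroup.
\end{enumerate}

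\emph{What is missing.} The paper does not use \(O_p(R)\). It takes \(A=N'\), where \(N\) is the penultimate term of the derived series of \(R\), and lets \(H\) be the relative system normalizer in \(G\) of a Sylow system of \(N\).
\begin{itemize}
\item Broshi's Corollary~4.6 gives \(A\cap H=1\) and \(AH=G\).
\item Conjugacy of Sylow systems in the solvable group \(N\) shows that \(H^\phi\) is \(N\)-conjugate to \(H\) for every \(\phi\in\Aut(G)\).
\item Only afterwards does the paper pass to a \(p\)-part \(A_p\), which is complemented by \(A_{p'}H\).
\end{itemize}
In both examples above this construction yields \(A=V\), a normal Hall subgroup, which causes no trouble.
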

\begin{proof}
	Note that if we found (in polynomial time) a characteristic abelian subgroup \( A \) which is characteristically complemented by \( H \),
	then we can find an abelian \( p \)-subgroup with the required properties.
	Indeed, let \( A_p \) be the Sylow \( p \)-subgroup of \( A \) for some prime \( p \) dividing \( |A| \).
	Let \( A_{p'} \) be the product of all the other Sylow subgroups of \( A \), so \( A = A_p A_{p'} \) and \( A_p \cap A_{p'} = 1 \).
	Clearly \( A_p \) is a characteristic abelian \( p \)-subgroup of \( G \), and it is characteristically complemented by \( A_{p'}H \).
	Since Sylow subgroups can be computed in polynomial time, \( A_p \) and \( A_{p'}H \) are also computable in polynomial time.

	Let \( S \) be the solvable radical of \( G \); recall that it can be computed in polynomial time.
	Let \( N \) be the penultimate member of the derived series of \( S \).
	Since \( S \) is nonabelian by our assumptions, \( N \) is nonabelian and \( A = [N, N] \) is abelian.
	Both \( N \) and \( A \) can be computed in polynomial time as the derived series is computable in polynomial time.
	Let \( p_1, \dots, p_k \) be the set of prime divisors of \( |N| \). Since \( N \) is solvable,
	for every \( i = 1, \dots, k \) there exists a Hall \( p_i' \)-subgroup \( N_i \leq N \).
	We can compute \( N_1, \dots, N_k \) in polynomial time by~\cite[Theorem~5.3]{kantor}.
	Set \( P_i = \bigcap_{j \neq i} N_j \) for \( i = 1, \dots, k \). Then \( P_i \) is a Sylow \( p_i \)-subgroup of \( N \)
	and \( P_1, \dots, P_k \) form a Sylow system, that is, \( P_iP_j = P_jP_i \) for all \( i, j \).

	Let \( H = \{ g \in G \mid P_i^g = P_i,\, i = 1, \dots, k \} \) be the relative system normalizer of \( P_1, \dots, P_k \) in \( G \).
	By~\cite[Corollary~4.6]{broshi}, we have \( A \cap H = 1 \) and \( AH = G \). For any \( \phi \in \Aut(G) \) the subgroup \( H^\phi \)
	is a relative system normalizer of \( P_1^\phi, \dots, P_k^\phi \). In a solvable group all Sylow systems are conjugate, so
	\( H^\phi \) is conjugate to \( H \) by an element from \( N \), and hence \( A \) is characteristically complemented by \( H \).
\end{proof}

We are finally ready to give the proof of the main theorem.
\medskip

\noindent
\emph{Proof of Theorem~\ref{main}.}
By Theorem~\ref{treduct}, it suffices to solve \( \mathrm{GrpAGEN}_\X \) for the class \( \X \) of solvable A-groups.
We reason by induction on the order of the input group \( G \).

If \( G \) is abelian, we can compute \( \Aut(G) \) by~\cite{absyl} or by applying Proposition~\ref{units} to \( \End(G) \).
If \( G \) is nonabelian, then by Lemma~\ref{sgen} there exists a nontrivial
characteristic abelian \( p \)-subgroup \( A \) of \( G \) which is characteristically complemented by \( H \leq G \).
Moreover, \( A \) and \( H \) can be found in polynomial time. Since \( |H| < |G| \) and \( H \) is a solvable A-group, we can compute \( \Aut(H) \)
in polynomial time by a recursive call to \( \mathrm{GrpAGEN}_\X \). Now we can compute \( \Aut(G) \) in polynomial time by Lemma~\ref{lred}.

It is left to note that the overall algorithm works in polynomial time since all steps are performed in polynomial time
and the reduction from \( G \) to a smaller group \( H \) involves no branching and always reduces the size of the input.
\qed

\section{Reduction to the group automorphism problem}\label{secAut}

We will consider a class of finite groups \( \X \) which is closed with respect to
taking direct factors and direct products. In other words, if \( G \in \X \)
and \( G \simeq A \times B \) then \( A, B \in \X \), and if \( A, B \in \X \) then \( A \times B \in \X \).
Note that the classes of all finite groups, solvable, nilpotent groups,
groups with trivial solvable radical, and, most importantly for our purposes, A-groups satisfy this definition.

We define the following set of computational problems, where the subscript \( \X \) means that input groups lie in the class \( \X \).
Recall that input groups are given by their Cayley tables and the group of automorphisms of a finite group is given by its permutation generators.
The automorphic partition of a group \( G \) is the partition of \( G \) into \( \Aut(G) \)-orbits.
\begin{align*}
	&\mathrm{GrpISO}_\X(G, H): \text{ determine if two groups } G \text{ and } H \text{ are isomorphic or not.}\\
	&\mathrm{GrpIMAP}_\X(G, H): \text{ find an isomorphism between } G \text{ and } H \text{ if it exists.}\\
	&\mathrm{GrpICOUNT}_\X(G, H): \text{ count the number of isomorphisms between } G \text{ and } H.\\
	&\mathrm{GrpACOUNT}_\X(G): \text{ count the number of automorphisms of } G.\\
	&\mathrm{GrpAGEN}_\X(G): \text{ find generators of the full automorphism group of } G.\\
	&\mathrm{GrpAPART}_\X(G): \text{ find the automorphic partition of } G.
\end{align*}

For two computational problems \( P_1 \) and \( P_2 \) we write \( P_1 \propto P_2 \) if \( P_1 \) is polynomial-time reducible to \( P_2 \).
Problems \( P_1 \) and \( P_2 \) are polynomial-time equivalent if \( P_1 \propto P_2 \) and \( P_2 \propto P_1 \).

\begin{theorem}\label{treduct}
	Let \( \X \) be a class of finite groups which is closed with respect to taking direct factors and direct products.
	The following holds:
	\begin{enumerate}[\rm(i)]
		\item \( \mathrm{GrpISO}_\X \propto \mathrm{GrpACOUNT}_\X \propto \mathrm{GrpAGEN}_\X \),
		\item \( \mathrm{GrpISO}_\X \propto \mathrm{GrpAPART}_\X \propto \mathrm{GrpAGEN}_\X \),
		\item \( \mathrm{GrpIMAP}_\X \propto \mathrm{GrpAGEN}_\X \),
		\item \( \mathrm{GrpICOUNT}_\X \) and \( \mathrm{GrpACOUNT}_\X \) are polynomial-time equivalent.
	\end{enumerate}
\end{theorem}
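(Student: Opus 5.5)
The plan is to follow the classical argument of~\cite{grpreduct} (going back to Mathon's reductions for graphs), checking at each step that every group we build stays inside \( \X \). The key construction is \( G \times H \). It lies in \( \X \) whenever \( G, H \in \X \), since \( \X \) is closed under direct products. Its Cayley table, and the embeddings of the two factors, are computable in polynomial time.

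The central structural fact is a Krull--Schmidt type statement for \( \Aut(G \times H) \) when \( G \) and \( H \) are directly indecomposable. Every group in question splits, in polynomial time by~\cite{kayal}, into indecomposable factors, and these factors lie in \( \X \) because \( \X \) is closed under direct factors. Rather than working with indecomposables, though, I would use orders and counting, which avoids delicate uniqueness issues.

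\textbf{Part (iv).} If \( G \simeq H \), the isomorphisms \( G \to H \) form a coset of \( \Aut(G) \), so \( \mathrm{GrpICOUNT}_\X(G,H) \in \{0, |\Aut(G)|\} \). This immediately gives \( \mathrm{GrpACOUNT}_\X \propto \mathrm{GrpICOUNT}_\X \) via \( (G,G) \).

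For the converse, compare \( a = |\Aut(G)| \), \( b = |\Aut(H)| \) and \( c = |\Aut(G\times H)| \), all obtained from the oracle. I would show that \( G \simeq H \) iff \( c \) equals the value predicted by the formula for \( |\Aut(G \times G)| \) in terms of \( a \). That is delicate because of central automorphisms \( \Hom(G, Z(H)) \). A cleaner route is Mathon's trick: \( |\Aut(G \times H)| \) counts automorphisms swapping the factors only when \( G \simeq H \). Concretely, decompose \( G = G_1^{m_1}\times\cdots \) and \( H \) into indecomposables, group factors by order, and reduce to the case \( G, H \) indecomposable of equal order. Then, with the \( \Hom \)-groups computed explicitly,
\[ |\Aut(G\times H)| = |\Aut_{\mathrm{pres}}| + |\text{swapping maps}|, \]
and the swapping set is nonempty iff \( G\simeq H \). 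Counting the factor-preserving part requires determining which matrices \( \begin{pmatrix} u & v\\ w & x\end{pmatrix} \) are automorphisms. I expect this to be the main obstacle and would follow~\cite{grpreduct} closely here.

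\textbf{Parts (i)--(iii).} \( \mathrm{GrpACOUNT}_\X \propto \mathrm{GrpAGEN}_\X \) is immediate: compute the order of a permutation group from its generators (Schreier--Sims). Likewise \( \mathrm{GrpAPART}_\X \propto \mathrm{GrpAGEN}_\X \) by computing orbits.

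For \( \mathrm{GrpISO}_\X \propto \mathrm{GrpAPART}_\X \) and \( \mathrm{GrpIMAP}_\X \propto \mathrm{GrpAGEN}_\X \), reduce as above to indecomposable \( G,H \) of equal order and consider \( \Aut(G\times H) \). For \( \mathrm{GrpIMAP}_\X \), take generators of \( \Aut(G\times H) \). If some generator does not map the subgroup \( G\times 1 \) into itself, its composition with the projections yields a homomorphism \( G\to H \). By Fitting's lemma for indecomposables, either some such component is an isomorphism, or all components are nilpotent endomorphism compositions. The latter would force the generators to lie in the factor-preserving subgroup, so conclude \( G\not\simeq H \).

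For \( \mathrm{GrpAPART}_\X \), decide isomorphism by checking whether some orbit of \( \Aut(G\times H) \) meets both \( (G\times 1)\setminus 1 \) and \( (1\times H)\setminus 1 \) in the appropriate pattern. Specifically, test whether the union of orbits meeting \( G\times1 \) contains \( 1\times H \). This again reduces, via the Fitting-lemma analysis, to the existence of a factor swap. The whole argument uses only direct products and direct factors, so closure of \( \X \) suffices.
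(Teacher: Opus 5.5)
\textbf{Gap: the counting step in (i) and (iv) is missing.} The easy reductions are fine: $\mathrm{GrpACOUNT}_\X,\mathrm{GrpAPART}_\X\propto\mathrm{GrpAGEN}_\X$, $\mathrm{GrpACOUNT}_\X\propto\mathrm{GrpICOUNT}_\X$, and the observation that $\mathrm{GrpICOUNT}_\X(G,H)\in\{0,|\Aut(G)|\}$. But $\mathrm{GrpISO}_\X\propto\mathrm{GrpACOUNT}_\X$ in (i) is not proved, and so neither is the direction $\mathrm{GrpICOUNT}_\X\propto\mathrm{GrpACOUNT}_\X$ of (iv). You set aside the comparison of $|\Aut(G\times H)|$ with $|\Aut(G)|,|\Aut(H)|$ as ``delicate because of central automorphisms''. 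You then defer counting the factor-preserving automorphisms as ``the main obstacle''. That count is the entire content of the reduction.

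The missing ingredient is the Bidwell--Curran--McCaughan description of $\Aut(G\times H)$ for nonabelian directly indecomposable $G,H$:
\begin{enumerate}
\item the automorphisms not swapping the factors are exactly the matrices $\begin{pmatrix}\alpha&\beta\\ \gamma&\delta\end{pmatrix}$ with $\alpha\in\Aut(G)$, $\delta\in\Aut(H)$, $\beta\in\Hom(H,Z(G))$, $\gamma\in\Hom(G,Z(H))$;
\item every such matrix is an automorphism (this uses that $G,H$ have no abelian direct factor);
\item these matrices form a subgroup $\mathcal{A}$ of index $1$ or $2$ according as $G\not\simeq H$ or $G\simeq H$.
\end{enumerate}
Hence $|\Aut(G\times H)|=|\Aut(G)|\,|\Aut(H)|\,|\Hom(G,Z(H))|\,|\Hom(H,Z(G))|\,\epsilon_{G,H}$ with $\epsilon_{G,H}\in\{1,2\}$. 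Homomorphisms into an abelian group can be counted in polynomial time (Lemma~\ref{lcounthom}), so three oracle calls determine $\epsilon_{G,H}$. The central homomorphisms you found delicate are simply counted. Your Fitting-lemma analysis cannot supply this: at best it identifies which automorphisms swap, not how many there are.

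\textbf{Parts (ii) and (iii): a different route, but incomplete.} Your Fitting-lemma approach differs from the paper's, which uses the same structure theorem via Lemma~\ref{linvar}: $G\times Z(H)$ is $\Aut(G\times H)$-invariant iff $G\not\simeq H$. Your route can be completed, but several steps are missing.
\begin{enumerate}
\item Restrict explicitly to nonabelian indecomposables. Abelian ones are cyclic $p$-groups, settled by their orders, and for them the swap/non-swap dichotomy already fails in $\GL_2(p)$.
\item Fitting's lemma needs normal endomorphisms, and compositions of blocks of $\psi$ are not normal in general. Use instead the projections of $G\times H=\psi(G\times 1)\times\psi(1\times H)$. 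These show that for every automorphism either the $G\to G$ block or the $G\to H$ block is bijective; the latter case uses $|G|=|H|$.
\item Most importantly, concluding $G\not\simeq H$ from ``all generators are non-swapping'' requires that the non-swapping automorphisms form a subgroup. This follows because bijective diagonal blocks force the off-diagonal blocks to be central. So the $G\to H$ block of a product of non-swapping automorphisms lands in $Z(H)\neq H$, and the product is again non-swapping.
\end{enumerate}
Also, $G\times 1$ is not invariant even when $G\not\simeq H$ (take $\gamma\neq0$), which is why the paper uses $G\times Z(H)$. Your $\mathrm{GrpIMAP}_\X$ filter survives only because you then test bijectivity of the $G\to H$ component. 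Your $\mathrm{GrpAPART}_\X$ test is correct once the above is established, since then orbits of $G\times1$ stay inside $G\times Z(H)\not\supseteq 1\times H$ when $G\not\simeq H$.
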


The proof of Theorem~\ref{treduct} essentially repeats the proof in~\cite{grpreduct} with the main difference
that we note that our intermediate constructions lie in the class \( \X \). We provide the full proof here for the reader's convenience.

We need two preliminary results from~\cite{grpreduct}.

\begin{lemma}[{\cite[Lemma~3.1]{grpreduct}}]\label{lcounthom}
	Let \( A \) and \( B \) be groups given by their Cayley tables, and assume that \( B \) is abelian.
	Then one can compute \( |\Hom(A, B)| \) in polynomial time.
\end{lemma}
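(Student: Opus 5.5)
Since \( B \) is abelian, every homomorphism \( A \to B \) factors uniquely through the abelianization \( A/[A,A] \). So the plan is to reduce to counting homomorphisms between two finite abelian groups and then evaluate an explicit formula.

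First, compute the derived subgroup \( [A,A] \) and the quotient \( \bar A = A/[A,A] \) together with its Cayley table. Both are polynomial-time operations for groups given by Cayley tables, as listed in the Preliminaries. Then \( |\Hom(A,B)| = |\Hom(\bar A, B)| \).

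Next, find cyclic generators of \( \bar A \) and of \( B \), which is also polynomial time. Refine them into primary decompositions
\[ \bar A \simeq \bigoplus_i \mathbb{Z}/m_i\mathbb{Z}, \qquad B \simeq \bigoplus_j \mathbb{Z}/n_j\mathbb{Z}. \]
Alternatively, read off invariants directly, for instance by counting the elements of each order in each group. Hom is biadditive, and \( |\Hom(\mathbb{Z}/m\mathbb{Z}, \mathbb{Z}/n\mathbb{Z})| = \gcd(m,n) \), since a homomorphism is determined by the image of \( 1 \), which must be an element of \( \mathbb{Z}/n\mathbb{Z} \) of order dividing \( m \). This gives
\[ |\Hom(A,B)| = \prod_{i,j} \gcd(m_i, n_j). \]
There are at most \( \log|A| \cdot \log|B| \) factors. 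The result has \( O(\log|A|\log|B|) \) bits, because it divides \( |B|^{\log |A|} \), so it can be computed and written down in polynomial time.

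The only step needing care is making sure the decomposition into cyclic factors is genuinely computed in polynomial time. This is cited in the Preliminaries, and I would use that. As a fallback, the counting can avoid decompositions entirely. For each prime \( p \) and each \( k \), compute \( |\{x \in \bar A : x^{p^k}=1\}| \) and the analogous count for \( B \) by brute force over the Cayley tables. These counts determine the invariants \( m_i \) and \( n_j \), so the product formula can be evaluated from them.
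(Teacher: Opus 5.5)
Your argument is correct: factoring through \(A/[A,A]\), decomposing both abelian groups into cyclic factors, and evaluating \(\prod_{i,j}\gcd(m_i,n_j)\) is the standard proof of this fact, and every step uses only the polynomial-time primitives listed in the Preliminaries. The paper does not reprove the lemma; it only cites it from the literature, so there is no different route to compare against. One small fix: the count divides \(|B|^{k}\), where \(k \le \log|A|\) is the number of cyclic factors of \(A/[A,A]\). Saying it divides \(|B|^{\log|A|}\) literally is not well defined when \(\log|A|\) is not an integer, though your bit-size bound is unaffected.
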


\begin{lemma}[{\cite[Lemma~3.2]{grpreduct}}]\label{linvar}
	Let \( G \) and \( H \) be finite directly indecomposable nonabelian groups, and let \( S = G \times Z(H) \).
	Then \( S \leq G \times H \) is invariant under \( \Aut(G \times H) \) if and only if \( G \) is not isomorphic to~\( H \).
\end{lemma}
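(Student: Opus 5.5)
The plan is to prove the two directions separately. The forward direction is an explicit construction. For the converse I first recast \( S \) as a centralizer and then invoke the Krull--Remak--Schmidt theorem in its strong form, the one that includes central automorphisms. Throughout I identify \( G \) with \( G \times 1 \) and \( H \) with \( 1 \times H \) inside \( G \times H \).

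\emph{If \( G \simeq H \), then \( S \) is not invariant.} Fix an isomorphism \( \theta : G \to H \) and define the swap \( \sigma \in \Aut(G \times H) \) by \( \sigma(g,h) = (\theta^{-1}(h), \theta(g)) \). Since \( \theta \) maps \( Z(G) \) onto \( Z(H) \), we get \( S^\sigma = \theta^{-1}(Z(H)) \times H = Z(G) \times H \). Because \( H \) is nonabelian, there is some \( h \in H \setminus Z(H) \). Then \( (1,h) \in S^\sigma \) but \( (1,h) \notin S \), so \( S^\sigma \neq S \).

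\emph{If \( G \not\simeq H \), then \( S \) is invariant.} The key observation is that
\[ S = G \times Z(H) = C_{G \times H}(H). \]
So it suffices to show that \( C_{G\times H}(H^\phi) = C_{G \times H}(H) \) for every \( \phi \in \Aut(G \times H) \). Put \( Z = Z(G \times H) \), and note that \( C(XZ) = C(X) \) for any subset \( X \). Hence it is enough to prove that \( H^\phi Z = HZ \).

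Now \( G \times H = G^\phi \times H^\phi \) are two decompositions into directly indecomposable factors. I will use the Krull--Remak--Schmidt theorem in the form: there is a normal automorphism \( \tau \), acting as \( x \mapsto x z(x) \) with \( z(x) \in Z \), together with a bijection between the factors of the two decompositions, such that \( \tau \) maps each factor of the first decomposition onto the corresponding factor of the second. Matched factors are isomorphic. Since \( H^\phi \simeq H \), \( G^\phi \simeq G \) and \( G \not\simeq H \), the bijection is forced to pair \( H \) with \( H^\phi \). Therefore \( H^\phi = H^\tau \subseteq HZ \), and so \( H^\phi Z \subseteq HZ \). The two sides have equal order, since \( |H^\phi Z| = |H^\phi|\,|Z|/|H^\phi \cap Z| \) and \( H^\phi \cap Z = Z(H^\phi) = Z(H)^\phi \). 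Hence \( H^\phi Z = HZ \), which gives \( S^\phi = C(H^\phi) = C(H) = S \).

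The main obstacle is quoting the correct version of Krull--Remak--Schmidt: I need the central-automorphism form, not merely uniqueness of the factors up to isomorphism. If I prefer to avoid it, the fallback is the Bidwell--Curran--McCaughan description of \( \Aut(G \times H) \) when \( G \) and \( H \) have no common direct factor. It gives matrices \( \left(\begin{smallmatrix}\alpha & \beta\\ \gamma & \delta\end{smallmatrix}\right) \) with \( \beta \in \Hom(G, Z(H)) \) and \( \gamma \in \Hom(H, Z(G)) \), and from this the invariance of \( G \times Z(H) \) can be checked directly.
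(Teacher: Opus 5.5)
Your proof is correct. It differs from the route the paper has in view, which is the fallback you mention.

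The paper does not prove this lemma; it quotes it. The tool placed right next to it, in the proof of Theorem~\ref{treduct}, is the Bidwell--Curran--McCaughan description of \( \Aut(G \times H) \), and with it the lemma is a one-line check. For \( G \not\simeq H \), every automorphism is a matrix \( \begin{pmatrix} \alpha & \beta \\ \gamma & \delta \end{pmatrix} \in \mathcal{A} \), which sends \( (g,z) \) with \( z \in Z(H) \) to \( (\alpha(g)\beta(z), \gamma(g)\delta(z)) \in G \times Z(H) \). For \( G \simeq H \), the swap moves \( S \) exactly as in your first paragraph.

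You instead identify \( S = C_{G \times H}(1 \times H) \) and reduce invariance to \( H^\phi Z = HZ \) with \( Z = Z(G \times H) \). You then obtain this from the central-isomorphism form of Krull--Remak--Schmidt. That form is standard (Remak's theorem that two decompositions into indecomposables are centrally isomorphic), so the step you flagged is sound.

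Even the plain exchange form would suffice. It gives \( G \times H = G \times H^\phi \), hence \( [G, H^\phi] = 1 \). Writing \( x \in H^\phi \) as \( x = gh \) with \( g \in G \), \( h \in H \) then forces \( g \in Z(G) \), i.e.\ \( H^\phi \subseteq Z(G)H \).

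What your argument buys:
\begin{enumerate}
\item A conceptual reason for the lemma: a directly indecomposable factor whose isomorphism type occurs only once is determined modulo the center, so its centralizer is characteristic.
\item It carries over verbatim to decompositions with any number of factors, without describing the whole automorphism group.
\end{enumerate}

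What the matrix route buys is economy inside this paper. The description of \( \Aut(G \times H) \) must be quoted anyway for the order formula in part~(i) and for extracting the isomorphism \( \chi \) in part~(iii) of Theorem~\ref{treduct}. Once it is available, the lemma costs nothing extra.
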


\noindent
\emph{Proof of Theorem~\ref{treduct}.}
The reductions \( \mathrm{GrpACOUNT}_\X \propto \mathrm{GrpAGEN}_\X \) and \( \mathrm{GrpAPART}_\X \propto \mathrm{GrpAGEN}_\X \)
follow from standard permutation group algorithms for finding the order and orbits of a permutation group.

Note that we can assume that input groups \( G, H \) in problems \( \mathrm{GrpISO}_\X \) and \( \mathrm{GrpIMAP}_\X \) are directly indecomposable.
Indeed, by~\cite{kayal} in polynomial time we can decompose our groups as \( G \simeq G_1 \times \dots \times G_k \) and \( H \simeq H_1 \times \dots \times H_m \),
where \( G_i \), \( i = 1, \dots, k \), and \( H_j \), \( j = 1, \dots, m \), are directly indecomposable.
By the definition of the class \( \X \), we have \( G_i, H_j \in \X \). By the Krull--Schmidt theorem, these decompositions of \( G \) and \( H \)
are unique up to reordering of the factors, so it suffices to check isomorphism (or find an explicit isomorphism in the case of \( \mathrm{GrpIMAP}_\X \))
for \( k \cdot m \leq \log |G| \cdot \log |H| \) pairs of directly indecomposable groups from~\( \X \).
Additionally we can assume that our input groups are nonabelian,
since in the abelian case \( \mathrm{GrpISO}_\X \) and \( \mathrm{GrpIMAP}_\X \) are solvable in polynomial time.

Given groups \( A, B, C \) and maps \( \theta : B \to C \), \( \phi : A \to B \), we define their product as
\( (\theta * \phi) : A \to C \) where \( (\theta * \phi)(a) = \theta(\phi(a)) \), \( a \in A \).
The product of two homomorphisms is a homomorphism.
Given maps \( \theta, \phi : A \to B \), we define their sum as \( (\theta + \phi) : A \to B \) where \( (\theta + \phi)(a) = \theta(a)\phi(a) \), \( a \in A \).
If \( \theta \) and \( \phi \) are homomorphisms and \( \theta(A) \) commutes elementwise with \( \phi(A) \), then \( \theta + \phi \) is also a homomorphism.

Let \( G \) and \( H \) be directly indecomposable nonabelian groups from the class~\( \X \).
The following description of \( \Aut(G \times H) \) was obtained in~\cite{bidwell, bidwell2}.
Define a group of formal matrices
\[ \mathcal{A} = \left\{ \begin{pmatrix} \alpha & \beta \\ \gamma & \delta \end{pmatrix} \mid \begin{aligned} &\alpha \in \Aut(G), \beta \in \Hom(H, Z(G)),\\
&\gamma \in \Hom(G, Z(H)), \delta \in \Aut(H) \end{aligned} \right\}, \]
where the group operation is matrix multiplication, and the products and sums of maps are computed as specified above.
A matrix from \( \mathcal{A} \) acts on an element \( (g, h) \in G \times H \) by multiplying the column-vector \( (g, h)^T \) by the matrix from the left:
\[ \tag{\(\star\)} \begin{pmatrix} \alpha & \beta \\ \gamma & \delta \end{pmatrix} (g, h)^T = (\alpha(g)\beta(h), \gamma(g)\delta(h))^T. \]
It is shown in~\cite{bidwell, bidwell2} that this gives a faithful action of \( \mathcal{A} \) on \( G \times H \) by automorphisms.
Moreover, if \( G \) and \( H \) are not isomorphic, then \( \Aut(G \times H) \simeq \mathcal{A} \), and if \( G \) and \( H \) are isomorphic, then
\( \Aut(G \times H) \simeq \mathcal{A} \rtimes \langle \phi \rangle \),
where \( \phi \) is an automorphism of order \( 2 \) which swaps the direct factors of \( G \times H \).

To prove the reduction \( \mathrm{GrpISO}_\X \propto \mathrm{GrpACOUNT}_\X \), note that
\begin{multline*}
	|\Aut(G \times H)| = |\mathcal{A}| \cdot \epsilon_{G,H} = \\ = |\Aut(G)| \cdot |\Aut(H)| \cdot |\Hom(G, Z(H))| \cdot |\Hom(H, Z(G))| \cdot \epsilon_{G,H},
\end{multline*}
where \( \epsilon_{G,H} = 1 \) if \( G \not\simeq H \) and \( \epsilon_{G,H} = 2 \) otherwise. By definition of the class \( \X \) and our assumptions,
\( G \times H \), \( G \) and \( H \) lie in \( \X \), so we can use three calls to the \( \mathrm{GrpACOUNT}_\X \) oracle to compute
\( |\Aut(G \times H)| \), \( |\Aut(G)| \) and \( |\Aut(H)| \). By Lemma~\ref{lcounthom}, we can compute \( |\Hom(G, Z(H))| \) and \( |\Hom(H, Z(G))| \)
in polynomial time, and so we can compute \( \epsilon_{G,H} \). Part~(i) is proved.

In order to reduce \( \mathrm{GrpISO}_\X \) to \( \mathrm{GrpAPART}_\X \), note that \( G \times H \in \X \) and we can compute the automorphic
partition of \( G \times H \) by one call to the \( \mathrm{GrpAPART}_\X \) oracle. By Lemma~\ref{linvar}, groups \( G \) and \( H \) are not isomorphic if and only if
\( G \times Z(H) \) is a union of \( \Aut(G \times H) \)-orbits, which can be tested in polynomial time. Part~(ii) is proved.

We prove part~(iii). Set \( S = G \times Z(H) \). Since \( G \times H \in \X \), we can compute \( \Aut(G \times H) \) by a call to the \( \mathrm{GrpAGEN}_\X \) oracle.
If all generators of \( \Aut(G \times H) \) stabilize \( S \), then by Lemma~\ref{linvar}, \( G \) and \( H \) are not isomorphic and we are done.
Suppose \( \psi \in \Aut(G \times H) \) is a generator for which \( \psi(S) \neq S \), so there exists an isomorphism \( \pi : G \to H \).
For \( g \in G \) define \( \chi(g) \in H \) as the projection of \( \psi((g, 1)) \in G \times H \) to \( H \).
The map \( \chi : G \to H \) is a well-defined homomorphism which can be computed in polynomial time.

Since \( G \simeq H \), we have \( \Aut(G \times H) \simeq \mathcal{A} \rtimes \langle \phi \rangle \),
where \( \phi((g, h)) = (\pi^{-1}(h), \pi(g)) \), \( g \in G \), \( h \in H \). Hence \( \psi(x) = \phi(\theta(x)) \)
for some \( \theta \in \mathcal{A} \). In the notation of formula~\( (\star) \) we have
\( \theta((g, 1)) = (\alpha(g)\beta(1), \gamma(g)\delta(1)) = (\alpha(g), \gamma(g)) \), hence
\[ \psi((g, 1)) = \phi(\theta((g,1))) = \phi((\alpha(g), \gamma(g))) = (\pi^{-1}(\gamma(g)), \pi(\alpha(g))). \]
We have \( \chi(g) = \pi(\alpha(g)) \), and since \( \alpha \in \Aut(G) \), the map \( \chi : G \to H \) is an isomorphism.
Part~(iii) is proved.

To prove part~(iv), note that \( \mathrm{GrpACOUNT}_\X \propto \mathrm{GrpICOUNT}_\X \) holds trivially,
as the size of the full automorphism group is equal to the number of isomorphisms between the group and itself.
To show \( \mathrm{GrpICOUNT}_\X \propto \mathrm{GrpACOUNT}_\X \), note that since \( \mathrm{GrpISO}_\X \propto \mathrm{GrpACOUNT}_\X \),
we can check whether \( G \) and \( H \) are isomorphic or not. If they are isomorphic, then the number of isomorphisms
is equal to \( |\Aut(G)| \), which can be computed with one call to the \( \mathrm{GrpACOUNT}_\X \) oracle. If the groups are not isomorphic,
then we output \( 0 \). Part~(iv) is proved. \qed

\section{Final remarks}\label{secFinal}

Although Theorem~\ref{main} applies to solvable A-groups only, the proof is structured in a way that the solvability assumption
is used only at the very end. The reason for such organization of the proof is that in the earlier version of this text the author erroneously
claimed a polynomial-time isomorphism test for arbitrary, not necessarily solvable, A-groups. The mistake in the old argument was after Lemma~\ref{sgen}
and was brought to the author's attention by J.A.~Grochow; it was stated that an A-group \( G \) with a nontrivial abelian solvable radical has
a nontrivial characteristic abelian \( p \)-subgroup which is characteristically complemented
in~\( G \). This is not true in general, for instance, let \( G \) be the semidirect product of \( \mathrm{Alt}(5)^7 \) by \( C_{49} \),
where \( \mathrm{Alt}(5)^7 \) is a direct product of \( 7 \) copies of \( \mathrm{Alt}(5) \), and \( C_{49} \) is the cyclic group of order \( 49 \)
which acts on \( \mathrm{Alt}(5)^7 \) by transitively permuting the direct factors.
One can easily check that \( G \) is an A-group, and its solvable radical of order \( 7 \) is not characteristically complemented in~\( G \).

Note that Lemmata~\ref{lred} and~\ref{sgen} reduce the isomorphism problem for arbitrary A-groups to the problem of computing
the full automorphism group of an A-group with the abelian solvable radical. Deciding isomorphism of groups with abelian solvable radicals
is an important open problem, see~\cite[Open Problem~8.1]{grochow}. The most general known results in this direction are due to
Babai, Codenotti, Qiao~\cite{trivrad} for groups with trivial solvable radical, and Grochow, Qiao~\cite{grochow} for groups with the elementary abelian
solvable radical and certain restrictions on the number of minimal normal subgroups in the quotient by the solvable radical.

It is natural to hope that structural properties of A-groups should help with this problem. For example, the main result of Broshi, see~\cite[Theorem~5.1]{broshi},
implies that an A-group \( G \) splits over the last term of its derived series \( G^{(\infty)} \), while \( G^{(\infty)} \) splits over its solvable radical.
In particular, the main bottleneck for generalizing Theorem~\ref{main} to nonsolvable A-groups seems to be the case when the solvable radical of \( G \)
is abelian and the solvable radical of \( G^{(\infty)} \) is trivial. It is possible that cohomological techniques in the spirit of~\cite{grochow}
or a generalization of the approach in~\cite{trivrad} could be used in this case, and the author plans to return to this question in the future.

\section{Acknowledgements}

The author is indebted to J.A.~Grochow for pointing out a mistake in the old version of this text, and for many useful remarks.
The author also expresses his gratitude to I.N.~Ponomarenko and A.V.~Vasil'ev for helpful suggestions that improved the text.

The research was carried out within the framework of the Sobolev Institute of Mathematics state contract (project FWNF-2026-0017).

\bigskip

\noindent
\emph{Saveliy V. Skresanov}

\noindent
\emph{Sobolev Institute of Mathematics, 4 Acad. Koptyug avenue,\\ 630090 Novosibirsk, Russia}

\noindent
\emph{Email address: skresan@math.nsc.ru}

\end{document}